\documentclass[12pt]{amsart}
\usepackage{amsfonts,amsmath,amssymb,amscd,mathrsfs}
\usepackage{xcolor}
\usepackage{verbatim}
\usepackage{enumitem}
\usepackage{hyperref}
\usepackage{tikz-cd}
\newtheorem{theorem}{Theorem}[section]
\newtheorem{lemma}[theorem]{Lemma}
\newtheorem{corollary}[theorem]{Corollary}
\newtheorem{proposition}[theorem]{Proposition}

\theoremstyle{definition}
\newtheorem{definition}[theorem]{Definition}

\newtheorem{remark}[theorem]{Remark}

\theoremstyle{remark}

\def \D{\mathbb{D}}

\def \Z{\mathbb{Z}}

\def \C{\mathbb{C}}
\def \T{\mathbb{T}}
\newcommand{\wt}{\widetilde}
\newcommand{\clb}{\mathcal{B}}
\newcommand{\clh}{\mathcal{H}}
\newcommand{\clk}{\mathcal{K}}
\newcommand{\clr}{\mathcal{R}}
\newcommand{\cli}{\mathcal{I}}
\newcommand{\clt}{\mathcal{T}}
\newcommand{\cls}{\mathcal{S}}
\newcommand{\vp}{\varphi}

\newcommand{\ip}[2]{\left\langle #1,#2\right\rangle}

\newcommand\restr[2]{\ensuremath{\left.#1\right|_{#2}}}

\numberwithin{equation}{section}

\subjclass[2020]{Primary 47B35; Secondary 47A15, 47B47, 46E20}

\keywords{Paired operators, Toeplitz operators, Hankel operators, model spaces, reproducing kernels, truncated Toeplitz operators}

\begin{document}
	\title[Algebraic characterizations of paired operators on model spaces]{Algebraic characterizations of paired operators on model spaces}
	\author{Sudip Ranjan Bhuia}
	\address{Department of Mathematics, Shiv Nadar University, NH91, Tehsil Dadri, Greater Noida, Gautam Buddha Nagar, Uttar Pradesh-201314, India}
	\email{sudipranjanb@gmail.com; sudip.bhuia@snu.edu.in}
	\author{Satyabrata Majee}
	\address{School of Mathematical and Statistical Sciences, Research Focus: Pure and Applied Analytics,  North-West University, Potchefstroom, 2520, South Africa}
	\email{majeesatyabrata@gmail.com, 56337817@mynwu.ac.za}
	\date{\today}

	\begin{abstract}
	 We study paired multiplication operators associated with orthogonal decompositions of $L^2(\mathbb{T})$ arising from Hardy spaces and Sz.-Nagy--Foias model spaces. For a non-constant inner function $\theta$, we characterize $\mathcal{K}_\theta$-paired and triply paired operators by finite-rank commutator identities with the bilateral shift $M_z$. We also apply these ideas to sums of truncated Toeplitz and truncated Hankel operators on $\mathcal{K}_\theta$. Using the displacement characterizations of Sarason and Gu--Ma, we derive a second-order finite-rank displacement formula for such sums and describe the ambiguity of the decomposition in terms of $\mathcal{I}_\theta =\mathcal{T}_\theta \cap \mathcal{H}_\theta$. In the finite-dimensional case $\theta(z)=z^n$, the framework recovers the classical Toeplitz-plus-Hankel matrix recurrence of Bevilacqua--Bonanni--Bozzo.
	\end{abstract}

	\maketitle
	\tableofcontents

	\section{Introduction}
	
	Toeplitz and Hankel operators form two fundamental classes in operator theory, with connections to function theory, interpolation, prediction theory, and systems theory; see, for example, \cite{Peller:Book}. On the Hardy space $H^2(\T)$, the algebraic structure of the Toeplitz operators is governed by the Brown--Halmos theorem, which characterizes Toeplitz operators through their interaction with the unilateral shift. Hankel operators satisfy analogous shift relations involving the backward shift. The interaction between these two classes, and in particular the structure of Toeplitz-plus-Hankel operators, has also been studied extensively. At the matrix level, Bevilacqua, Bonanni and Bozzo~\cite{BBB} showed that an $n\times n$ Toeplitz-plus-Hankel matrix is characterized by the recurrence
	\[
	T_{i-1,j}+T_{i+1,j}=T_{i,j-1}+T_{i,j+1},
	\qquad 1\le i,j\le n-2.
	\]
	
	In parallel, the compression of Toeplitz-type operators to model spaces has led to a rich operator-theoretic framework. Let $\theta$ be a non-constant inner function and let
	\[
	\clk_\theta=H^2\ominus \theta H^2
	\]
	be the corresponding Sz.-Nagy--Foias model space. Sarason initiated the systematic study of truncated Toeplitz operators on $\clk_\theta$ in \cite{Sarason2007}, these operators turn out to be canonical examples of complex symmetric operators in the sense of Garcia and Putinar \cite{Garcia:Putinar}. One of the central features of this theory is the displacement identity
	\[
	A-S_\theta A S_\theta^*
	=
	\psi\otimes k_0^\theta+k_0^\theta\otimes \chi,
	\]
	which gives an algebraic characterization of truncated Toeplitz operators. More recently, Gu and Ma obtained an analogous characterization for truncated Hankel operators on model spaces \cite{Gu:Ma}. These results show that the Toeplitz and Hankel structures on $\clk_\theta$ are encoded by rank-two defect relations involving the compressed shift $S_\theta$ and the kernels $k_0^\theta$ and $\widetilde k_0^\theta$.
	
	Another motivation for the present work comes from paired operators. Given complementary orthogonal projections $P$ and $Q$, a paired operator has the form
	\[
	AP+BQ.
	\]
	Such operators naturally arise when one studies multiplication operators relative to a fixed orthogonal decomposition. Recent work of Das, Das, and Sarkar \cite{Das:Das:Sarkar} developed algebraic characterizations of paired operators and related them to sums of Toeplitz and Hankel operators. The present paper develops an analogous framework adapted to model space decompositions.
	
	Our first object of study is the decomposition
	\[
	L^2(\T)=\clk_\theta\oplus \clk_\theta^\perp.
	\]
	For bounded symbols $\vp,\psi\in L^\infty(\T)$, we consider the $\clk_\theta$-paired multiplication operator
	\[
	X_{\vp,\psi}^{(\theta)}
	=
	M_\vp P_{\clk_\theta}+M_\psi(I-P_{\clk_\theta}).
	\]
	We prove that such operators are characterized by a finite-rank commutator identity with the bilateral shift $M_z$ on $L^2(\T)$. More precisely, after writing $a=\vp-\psi$, the commutator $[X_{\vp,\psi}^{(\theta)},M_z]$ is determined by the rank-two defect coming from the projection $P_{\clk_\theta}$.
	
	We then pass to the finer orthogonal decomposition
	\[
	L^2(\T)=\theta H^2\oplus \clk_\theta\oplus H^2_{-}.
	\]
	This leads to triply paired multiplication operators of the form
	\[
	X_{\vp,\psi,\chi}^{(\theta)}
	=
	M_\vp P_{\theta H^2}
	+
	M_\psi P_{\clk_\theta}
	+
	M_\chi P_{-}.
	\]
	We characterize these operators by a first-order commutator identity with $M_z$ and also by a spatial recovery formula. The latter recovers the three symbols from the action of the operator on the three summands. The main new feature is the rank-one term (cf. Theorem \ref{thm:triply-spatial})
	\[
	w \otimes \widetilde k_0^\theta,
	\]
	which appears because multiplication by $z$ does not leave $\clk_\theta$ invariant.
	
	In the final part of the paper, we apply this framework to sums of truncated Toeplitz and truncated Hankel operators on $\clk_\theta$. Let $\clt_\theta$ denote the space of truncated Toeplitz operators and $\clh_\theta$ the space of truncated Hankel operators on $\clk_\theta$. The sum space
	\[
	\cls_\theta=\clt_\theta+\clh_\theta
	\]
	is the model space analogue of the Toeplitz-plus-Hankel matrix class. Before turning to second-order displacements, we record cross-commutator reformulations of the Sarason and Gu--Ma identities (cf. Lemmas \ref{lem:sarason-equiv} and \ref{lem:hankel-equiv}), expressing the truncated Toeplitz and truncated Hankel structures as rank-two commutator defects with the compressed shift $S_\theta$, in the spirit of the classical Brown-Halmos commutator viewpoint.  Since the Toeplitz and Hankel displacement maps
	\[
	D_T(X)=X-S_\theta X S_\theta^*,
	\qquad
	D_H(X)=X-S_\theta^*XS_\theta^*
	\]
	need not commute, it is natural to consider the second-order displacement $D_HD_T$. For an operator
	\[
	T=A+B,\qquad A\in\clt_\theta,\quad B\in\clh_\theta,
	\]
	we compute $D_HD_T(T)$ explicitly and show that it is a finite-rank operator, of rank at most $4$, determined by the kernel functions $k_0^\theta$ and $\widetilde k_0^\theta$.
	
	We also give a displacement formulation of the condition that an operator $T\in\clb(\clk_\theta)$ be expressible as a sum of a truncated Toeplitz operator and a truncated Hankel operator. This is done in terms of the displacement maps $D_T$ and $D_H$; see Theorem~\ref{thm:TTO-THO-characterization}. A key point in the argument is the injectivity of these displacement maps on $\clb(\clk_\theta)$; see Lemma~\ref{lem:inj}.
	
	We also describe the non-uniqueness of the decomposition. If
	\[
	T=A+B=A'+B',
	\]
	where $A,A'\in\clt_\theta$ and $B,B'\in\clh_\theta$, then
	\[
	A-A'=B'-B\in\cli_\theta,
	\qquad
	\cli_\theta=\clt_\theta\cap\clh_\theta.
	\]
	Thus the decomposition is unique only modulo the intersection $\cli_\theta$. In the special case $\theta(z)=z^n$, this ambiguity space is two-dimensional and is spanned by the even and odd checkerboard matrices. In the same finite-dimensional setting, $\cls_\theta$ coincides with the classical space of Toeplitz-plus-Hankel matrices, and the Bevilacqua--Bonanni--Bozzo recurrence is recovered.
	
	The paper is organized as follows. Section~2 recalls the necessary preliminaries on Hardy spaces, model spaces, reproducing kernels, compressed shifts, and truncated Toeplitz and truncated Hankel operators. Section~3 studies $\clk_\theta$-paired multiplication operators and gives their commutator characterization. Section~4 treats triply paired multiplication operators associated with the decomposition
	\[
	L^2(\T)=\theta H^2\oplus\clk_\theta\oplus H^2_{-}.
	\]
	Section~5 applies the displacement framework to sums of truncated Toeplitz and truncated Hankel operators, proves the second-order finite-rank displacement formula, describes the ambiguity of the decomposition, and recovers the finite-dimensional Toeplitz-plus-Hankel matrix characterization when $\theta=z^n$.

	\section{Preliminaries}\label{sec:prelim}
	
	This section collects the analytic and operator-theoretic facts needed in the sequel. The conventions on Hardy spaces, inner functions, and model spaces follow~\cite{Sarason2007, NF}; for general background on multiplication, Toeplitz, and Hankel operators we refer to~\cite{Ni, Peller:Book}.

	Let $\clh$ be a complex separable Hilbert space, and let $\clb(\clh)$ denote the algebra of all bounded linear operators on $\clh$. For $A,B\in\clb(\clh)$, we denote their commutator by
	\[
	[A,B]=AB-BA.
	\] 
	We shall use the following notation for rank-one operators. For $p,q\in\clh$, define
	\[
	(p\otimes q)f=\langle f,q\rangle p,\qquad f\in\clh.
	\]
	For bounded operators $A,C\in\clb(\clh)$ and $\alpha\in\C$, we have
	\begin{equation}\label{eq:tensrule}
		A(p\otimes q)C=(Ap)\otimes(C^*q),\qquad
		(\alpha p)\otimes q=\alpha(p\otimes q),\qquad
		p\otimes(\alpha q)=\overline{\alpha}(p\otimes q).
	\end{equation}
	
	\subsection{Lebesgue and Hardy spaces}
	Let $\D = \{z \in \C : |z| < 1\}$ be the open unit disk and let $\T = \{z \in \C : |z| = 1\}$ be the unit circle. We equip $\T$ with the normalized Lebesgue measure:
	\[
	dm(t) = \frac{1}{2\pi} dt, \quad z = e^{it}, \ t \in [0, 2\pi).
	\]
	We write $L^2 := L^2(\T)$ for the Hilbert space of square integrable measurable functions on $\T$. 
	The inner product is given by
	\begin{align*}
		\ip{f}{g}_{L^2} = \int_{\T} f(z)\overline{g(z)}\,dm(z) = \frac{1}{2\pi} \int_{0}^{2\pi} f(e^{it})\overline{g(e^{it})}\,dt.
	\end{align*}
	The collection of functions $\{e_n(z) = z^n\}_{n \in \Z}$ forms a complete orthonormal basis for $L^2$. Any function $f \in L^2$ admits a formal Fourier expansion $f(z) = \sum_{n=-\infty}^\infty \hat{f}(n) z^n$, where convergence is in the $L^2$-norm.
	
	The Hardy space $H^2:=H^2(\T)$ is defined as the closed topological subspace of $L^2$ consisting of functions whose negative Fourier coefficients identically vanish:
	\[
	H^2 = \left\{ f \in L^2 : \hat{f}(n) = \int_{\T} f(z) z^{-n} dm(z) = 0 \text{ for all } n < 0 \right\}.
	\]
	By Fatou's Theorem, $H^2$ can be isometrically identified with the space of holomorphic functions on $\D$ satisfying 
	\[
	\sup_{0 < r < 1} \frac{1}{2\pi} \int_{0}^{2\pi} |f(re^{it})|^2 dt < \infty.
	\]
	
	We denote by $P_{+}$ the orthogonal Szeg\H{o} projection of $L^2$ onto $H^2$. If $f(z) = \sum_{n=-\infty}^\infty \hat{f}(n) z^n$, then
	\[
	P_{+} f(z) = \sum_{n=0}^\infty \hat{f}(n) z^n,
	\]
	and $P_{-} = I - P_{+}$ the projection onto the co-analytic space $H^2_{-} := \overline{zH^2} = L^2 \ominus H^2$.
	
	Let $H^\infty(\D)$ be the Banach algebra of all bounded analytic functions on the open unit disk $\D$, equipped with the norm
	\[
	\|f\|_\infty=\sup_{z\in\D}|f(z)|.
	\]
	That is,
	\[
	H^\infty(\D)
	=
	\left\{
	f:\D\to\C \;:\; f \text{ is analytic and } \sup_{z\in\D}|f(z)|<\infty
	\right\}.
	\]

	\subsection{Multiplication, Toeplitz, and Hankel operators}
	For $\vp\in L^\infty(\T)$, the Laurent multiplication operator $M_\vp : L^2 \to L^2$ is defined by 
	\[M_\vp f=\vp f,\quad f\in L^2.\] It is bounded and satisfies $\|M_\vp\|=\|\vp\|_\infty$. 
	
	The Toeplitz operator $T_\vp : H^2 \to H^2$ and Hankel operator $H_\vp : H^2 \to H^2_{-}$ with symbol $\vp$ are defined respectively as
	\[
	T_\vp f = P_{+} (\vp f), \qquad H_\vp f = P_{-} (\vp f), \quad f \in H^2.
	\]
	
	Let us rigorously record the action of the forward shift. The shift on $L^2$ is the unitary multiplication operator $M_z$. Restricted to the invariant subspace $H^2$, this becomes the unilateral isometric shift $T_z = \restr{M_z}{H^2}$. The adjoint of the unilateral shift is the backward shift $T_z^*$, whose action is defined by
	\[
	(T_z^* f)(z) = \frac{f(z) - f(0)}{z},\quad f\in H^2.
	\]
	
	\begin{theorem}\cite{BH}
		A bounded linear operator $A \in \clb(H^2)$ is a Toeplitz operator ($A = T_\vp$ for some $\vp \in L^\infty$) if and only if it satisfies 
		\begin{equation}
			T_z^* A T_z = A.
		\end{equation}
		Moreover, the correspondence $\vp \mapsto T_\vp$ is an isometric mapping, that is, $\|T_\vp\| = \|\vp\|_\infty$.
	\end{theorem}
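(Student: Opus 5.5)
The necessity direction is immediate. If $A=T_\vp$, then for $f\in H^2$ we have $T_z^*T_\vp T_z f=P_+(\bar z\,P_+(\vp z f))$. Since $P_+$ commutes with $M_z$ up to the constant term, and $\bar z$ kills it, this equals $P_+(\bar z\vp zf)=P_+(\vp f)=T_\vp f$. More cleanly, one uses $P_+M_{\bar z}P_+=P_+M_{\bar z}$ on $M_zH^2$. Either way the identity $T_z^*T_\vp T_z=T_\vp$ follows. For the norm, the bound $\|T_\vp\|\le\|\vp\|_\infty$ is trivial. The reverse bound will come out of the construction below, since $\vp$ is obtained as a weak-$*$ limit of operators of norm $\|A\|$.

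For sufficiency, I would take $A$ with $T_z^*AT_z=A$ and build the symbol through the bilateral shift. Define $A_n=M_z^{*n}AP_+M_z^{n}$ on $L^2$, where $A$ is extended by zero on $H^2_-$. Since $M_z^nL^2\supset H^2$ eventually on any trigonometric polynomial, these operators are defined on all of $L^2$ and satisfy $\|A_n\|\le\|A\|$. The hypothesis gives consistency: for trigonometric polynomials $f,g$ and all $n$ large, $\langle A_nf,g\rangle$ is constant in $n$, because $z^nf,z^ng\in H^2$ and $\langle Az^{n+1}f,z^{n+1}g\rangle=\langle T_z^*AT_z z^nf,z^ng\rangle=\langle Az^nf,z^ng\rangle$. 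Hence $A_n$ converges weakly on a dense set. By uniform boundedness it converges weakly to some $B\in\clb(L^2)$ with $\|B\|\le\|A\|$ and $M_z^*BM_z=B$, i.e.\ $B$ commutes with $M_z$. Moreover $P_+BP_+=A$, because for $f,g\in H^2$ the sequence is already constant from $n=0$.

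The key step, which I expect to be the main obstacle, is to show that an operator commuting with $M_z$ on $L^2$ is a multiplication operator $M_\vp$ with $\|\vp\|_\infty=\|B\|$. I would set $\vp=B1\in L^2$. Commutation gives $Bp=\vp p$ for all trigonometric polynomials $p$, so $\|\vp p\|_2\le\|B\|\|p\|_2$. To get $\vp\in L^\infty$, I would pass to bounded functions $h$ by approximating with trigonometric polynomials in $L^2$ (Fejér means, which are uniformly bounded). Then $\int|\vp|^2|h|^2\le\|B\|^2\int|h|^2$ for all $h\in L^\infty$. Taking $h$ the indicator of $\{|\vp|>\|B\|+\epsilon\}$ forces that set to have measure zero. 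Thus $\|\vp\|_\infty\le\|B\|$, and $B=M_\vp$ by density.

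Combining these steps gives $A=P_+M_\vp|_{H^2}=T_\vp$ and $\|\vp\|_\infty\le\|B\|\le\|A\|=\|T_\vp\|\le\|\vp\|_\infty$. This yields the isometry, with $\vp$ uniquely determined since its Fourier coefficients are $\langle Az^j,z^i\rangle$ shifted appropriately.
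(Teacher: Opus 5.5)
Your proposal is correct, and it is the classical Brown--Halmos argument. The paper itself gives no proof of this theorem; it only quotes it from Brown--Halmos as background, so there is no in-paper proof to compare against. Your route is: form $A_n=M_{\bar z}^nAP_{+}M_z^n$, take its weak-operator limit $B$, observe that $B$ commutes with $M_z$, and identify the commutant of the bilateral shift with $\{M_\vp:\vp\in L^\infty\}$. Your ``key step'' is exactly Lemma~\ref{lem:commutant-bilateral} combined with the norm bound of Lemma~\ref{lem:multiplier-criterion}, both of which the paper uses later. You could cite those two lemmas instead of reproving them.

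A few minor fixes:
\begin{itemize}
\item In the necessity direction, use $P_{+}M_{\bar z}P_{+}=P_{+}M_{\bar z}$ on all of $L^2$, not just on $M_zH^2$. It holds there because $M_{\bar z}H^2_{-}\subset H^2_{-}$. You need the full version because the vector $\vp zf$ you apply it to need not lie in $zH^2$.
\item The sentence about ``$M_z^nL^2\supset H^2$ eventually'' is not meaningful and can be dropped. $A_n$ is a product of bounded operators on $L^2$, so it is defined everywhere with $\|A_n\|\le\|A\|$. What you actually use is only that $z^nf,z^ng\in H^2$ for $n$ large when $f,g$ are trigonometric polynomials.
\item The isometry argument is fine because of your uniqueness remark. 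The identity $\langle T_\vp z^j,z^i\rangle=\widehat\vp(i-j)$ identifies the constructed symbol with the original $\vp$, which is what lets the chain $\|\vp\|_\infty\le\|B\|\le\|T_\vp\|$ apply to the given $\vp$.
\end{itemize}
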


	\subsection{Model spaces and operators}

	An analytic function $\theta\in H^\infty(\D)$ is called \emph{inner} if its radial boundary values satisfy
	\[
	|\theta(e^{it})|=1
	\quad\text{for almost every } t\in[0,2\pi).
	\]
	By the Maximum Modulus Theorem, every inner function satisfies $|\theta(z)|\le 1$ for every $z\in\D$.
	Moreover, if $\theta$ is non-constant, then
	\[
	|\theta(z)|<1,\qquad z\in\D.
	\]
	
	A fundamental result of classical Hardy space theory is Beurling's Theorem (cf. \cite{Beurling}), which states that every nontrivial closed $T_z$-invariant subspace of $H^2$ is of the form $\theta H^2$, where $\theta$ is an inner function. The associated \emph{model space} is defined as the orthogonal complement of this invariant subspace within the Hardy space:
	\[
	\clk_\theta = H^2 \ominus \theta H^2.
	\]
	Since
	\[
	L^2=H^2\oplus H^2_{-}
	\qquad\text{and}\qquad
	H^2=\clk_\theta\oplus \theta H^2,
	\]
	we have the orthogonal decomposition
	\[
	L^2=\theta H^2 \oplus  \clk_\theta  \oplus H^2_{-}.
	\]
	Consequently, the orthogonal complement of $\clk_\theta$ in $L^2$ is
	\[
	\clk_\theta^\perp=  \theta H^2 \oplus H^2_{-}.
	\]
	We denote by $P_{\clk_\theta}$ the orthogonal projection from $L^2$ onto $\clk_\theta$. Since $P_{\theta H^2} = M_\theta P_{+} M_{\bar{\theta}}$, we have
	\[
	P_{\clk_\theta} = P_{+} - P_{\theta H^2} = P_{+} - M_\theta P_{+} M_{\bar{\theta}}.
	\]
	
	The model space $\clk_\theta$ is a reproducing kernel Hilbert space. For any point $\lambda \in \D$, point evaluation is bounded on $\clk_\theta$, and the reproducing kernel for $\clk_\theta$ is given by
	\[
	k_\lambda^\theta(z) = \frac{1 - \overline{\theta(\lambda)}\theta(z)}{1 - \bar{\lambda}z}, \quad z \in \D.
	\]
	Thus, for any $f \in \clk_\theta$, we have $f(\lambda) = \langle f, k_\lambda^\theta \rangle$. For $\lambda = 0$, we have $k_0^\theta(z) = 1 - \overline{\theta(0)}\theta(z)$.
	
	The \emph{compressed shift} $S_\theta : \clk_\theta \to \clk_\theta$ is defined by 
	\[
	S_\theta f = P_{\clk_\theta}(zf), \quad f \in \clk_\theta.
	\]
	
	For a non-constant inner $\theta$, the compressed shift $S_{\theta}$ is a completely non-unitary contraction of class $C_{00}$ \cite[Ch.~III]{NF}, so $S_{\theta}^{n}f\to0$ and $(S_{\theta}^{*})^{n}f\to0$ for every
	$f\in \clk_{\theta}$.
	
	A critical structural tool on the model space is the \emph{canonical conjugation} $C_\theta : L^2 \to L^2$, defined in terms of boundary functions by
	\[
	(C_\theta f)(z) = \theta(z)\,\bar{z} \overline{f(z)},\quad z\in \T.
	\]
	The operator $C_\theta$ is a conjugate-linear, isometric involution ($C_\theta^2 = I$) that leaves the model space $\clk_\theta$ invariant. The \emph{conjugate kernel}, denoted $\tilde{k}_\lambda^\theta$ is given by
	\[
	\tilde{k}_\lambda^\theta(z) = (C_\theta k_\lambda^\theta)(z) = \frac{\theta(z) - \theta(\lambda)}{z - \lambda}.
	\]
	In particular, $\widetilde k_0^\theta = C_\theta k_0^\theta = T_z^*\theta$.
	
	From (\cite[Lemmas 2.2, 2.4]{Sarason2007}), we record the following identities
	\begin{align}
		I - S_\theta S_\theta^* &= k_0^\theta \otimes k_0^\theta, \label{eq:defect_forward} \\
		I - S_\theta^* S_\theta &= \tilde{k}_0^\theta \otimes \tilde{k}_0^\theta, \label{eq:defect_backward} \\
		S_\theta^* k_0^\theta &= -\overline{\theta(0)}\tilde{k}_0^\theta, \label{eq:asymA}\\
		S_\theta \tilde{k}_0^\theta &= -\theta(0) k_0^\theta. \label{eq:asymB}
	\end{align}

	For $\vp\in L^\infty(\T)$, the \emph{truncated Toeplitz operator} (TTO) is defined by
	\[
	A_\vp=\restr{P_{\clk_\theta}M_\vp}{\clk_\theta},
	\]
	and the \emph{dual truncated Toeplitz operator} (cf. \cite{Bhuia:Nag, Bhuia:Golla:Nag, Camara:Ross, Sang:Ding, Gu}) is defined by
	\[ D_\vp = \restr{P_{\clk_\theta^\perp}M_\vp}{\clk_\theta^\perp}.\]

	We also recall the corresponding truncated Hankel operator. Let $J:L^2\to L^2$ be the unitary operator defined by
	\[
	(Jf)(z)=\bar{z} f(\bar{z}), \qquad z\in\T.
	\]
	For $\vp\in L^\infty(\T)$, the \emph{truncated Hankel operator} (THO) with symbol $\vp$ is defined by
	\[
	B_\vp=\restr{P_{\clk_\theta}JP_{H^2_{-}}M_\vp}{\clk_\theta}.
	\]
	Throughout the paper we shall use the abbreviations TTO and THO interchangeably with their full names.	
	
	We introduce the following notations:
	
	\[
	\clt_\theta := \{A \in \clb(\clk_\theta) : A \text{ is a truncated Toeplitz operator on } \clk_\theta\},
	\]
	\[
	\clh_\theta := \{B \in \clb(\clk_\theta) : B \text{ is a truncated Hankel operator on } \clk_\theta\},
	\]
	and write
	\[
	\cls_\theta := \clt_\theta + \clh_\theta,\qquad
	\cli_\theta := \clt_\theta \cap \clh_\theta.
	\]

	We shall use the following algebraic characterizations of TTOs and THOs. By Sarason's theorem \cite[Theorem 4.1]{Sarason2007} (see also \cite{CFT}), an operator $A\in\clb(\clk_\theta)$ belongs to the space $\clt_\theta$ of bounded truncated Toeplitz operators if and only if there
	exist $\psi,\chi\in\clk_\theta$ such that
	\begin{equation}\label{eq:sarason}
		A-S_\theta AS_\theta^*
		=
		\psi\otimes k_0^\theta+k_0^\theta\otimes\chi .
	\end{equation}
	When $A=A_\vp$ with $\vp\in L^\infty(\T)$, the kernel of the bounded-symbol map is given by
	\[
	A_\vp=0
	\quad\Longleftrightarrow\quad
	\vp\in \theta H^2+\overline{\theta H^2}.
	\]
	
	By the Gu--Ma theorem \cite[Theorem 3.1]{Gu:Ma}, an operator $B\in\clb(\clk_\theta)$ belongs to the space $\clh_\theta$ of bounded truncated Hankel operators if and only if there exist $f,g\in\clk_\theta$
	such that
	\begin{equation}\label{eq:guma}
		B-S_\theta^*BS_\theta^*
		=
		f\otimes k_0^\theta+\widetilde{k}_0^\theta\otimes g .
	\end{equation}

	\section{Algebraic characterizations of doubly paired operators}
	
	The purpose of this section is to develop an algebraic characterization of paired multiplication operators associated with the model space decomposition
	\[
	L^2=\clk_\theta\oplus\clk_\theta^\perp .
	\]
	We begin with an abstract reduction result for paired operators associated with a pair of complementary orthogonal projections. This elementary block-matrix observation identifies the off-diagonal terms that obstruct the two summands from reducing the paired operator. We then specialize to the model space $\clk_\theta=H^2\ominus\theta H^2$ and obtain a commutator characterization of $\clk_\theta$-paired multiplication operators in terms of the bilateral shift $M_z$. The resulting identity shows that such operators are determined, up to a Laurent multiplication operator, by a rank-two defect term coming from the projection $P_{\clk_\theta}$. Finally, we record consequences for reducing decompositions and for analytic symbols.

	\begin{definition}\label{def:abstract-paired}
		Let $\clh$ be a Hilbert space, and let $P$ and $Q$ be orthogonal projections
		on $\clh$ such that
		\[
		P+Q=I_{\clh}.
		\]
		Thus $P$ and $Q$ are complementary orthogonal projections, equivalently
		$Q=I_{\clh}-P$ and $PQ=QP=0$. For $A,B\in\clb(\clh)$, the operator
		\[
		S_{A,B}:=AP+BQ
		\]
		is called the paired operator associated with $A,B$ and the pair $(P,Q)$.
		Equivalently, $S_{A,B}$ is associated with the orthogonal decomposition
		\[
		\clh=P\clh\oplus Q\clh .
		\]
	\end{definition}
	
	\begin{theorem}\label{thm:paired_reduction}
		Let $P$ and $Q$ be complementary orthogonal projections on a Hilbert space $\clh$, and let
		\[
		X=S_{A,B}=AP+BQ
		\]
		be the paired operator associated with $A,B\in\clb(\clh)$. Then the subspaces $P\clh$ and $Q\clh$ reduce $X$ if and only if
		\[
		QAP=0 \qquad\text{and}\qquad PBQ=0.
		\]
		In this case, with respect to the orthogonal decomposition $\clh=P\clh\oplus Q\clh$, the operator $X$ has the diagonal block representation
		\[
		X=
		\begin{bmatrix}
			PAP & 0\\
			0 & QBQ
		\end{bmatrix}.
		\]
		In particular, the above reduction holds whenever $A$ commutes with $P$ and $B$ commutes with $Q$.
	\end{theorem}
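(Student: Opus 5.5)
The plan is to compute the $2\times 2$ block matrix of $X$ with respect to $\clh=P\clh\oplus Q\clh$ and read off the answer. Since $P+Q=I$ and $PQ=QP=0$, we have $XP=AP$ and $XQ=BQ$. The four blocks are therefore
\[
PXP=PAP,\qquad QXP=QAP,\qquad PXQ=PBQ,\qquad QXQ=QBQ.
\]
So, relative to the decomposition $\clh=P\clh\oplus Q\clh$,
\[
X=\begin{bmatrix} PAP & PBQ\\ QAP & QBQ\end{bmatrix}.
\]

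Next we use the standard fact that a closed subspace $P\clh$ reduces $X$ if and only if $PX=XP$. This is equivalent to both $P\clh$ and $Q\clh=(P\clh)^\perp$ being $X$-invariant, and hence to $QXP=0$ and $PXQ=0$. Indeed, invariance of $P\clh$ means $QXP=0$, and invariance of $Q\clh$ means $PXQ=0$. Moreover, $P\clh$ reduces $X$ exactly when $Q\clh$ does.

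Combining this with the block computation, reduction holds if and only if $QAP=0$ and $PBQ=0$. When these hold, the off-diagonal blocks vanish, which gives the diagonal representation $\operatorname{diag}(PAP,QBQ)$.

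For the final assertion, suppose $AP=PA$ and $BQ=QB$. Then $QAP=QPA=0$ and $PBQ=PQB=0$, so the criterion applies.

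There is no real obstacle here. The only point needing care is to state precisely that ``reducing'' means invariance of both the subspace and its orthogonal complement (equivalently, commuting with the projection), and that the off-diagonal blocks are exactly the obstructions to this.
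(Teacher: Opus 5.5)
Your proposal is correct and follows essentially the same route as the paper's proof. Both compute the four blocks $PAP$, $PBQ$, $QAP$, $QBQ$ from $P^2=P$, $Q^2=Q$, $PQ=QP=0$, identify reduction with the vanishing of the off-diagonal blocks, and settle the commuting case via $QAP=QPA=0$ and $PBQ=PQB=0$. Your explicit remark that invariance of $P\clh$ is exactly $QXP=0$ and invariance of $Q\clh$ is exactly $PXQ=0$ spells out a step the paper asserts without comment.
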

	
	\begin{proof}
		With respect to the decomposition
		\[
		\clh=P\clh\oplus Q\clh,
		\]
		we write the block matrix of $X\in\clb(\clh)$ as
		\[
		X=
		\begin{bmatrix}
			PXP & PXQ\\
			QXP & QXQ
		\end{bmatrix},
		\]
		where, for instance, $PXP$ is regarded as an operator from $P\clh$ into $P\clh$, and $PXQ$ as an operator from $Q\clh$ into $P\clh$. Since $X=AP+BQ$, using $P^2=P$, $Q^2=Q$, and $PQ=QP=0$, we obtain
		\begin{align*}
			PXP &= P(AP+BQ)P=PAP,\\
			PXQ &= P(AP+BQ)Q=PBQ,\\
			QXP &= Q(AP+BQ)P=QAP,\\
			QXQ &= Q(AP+BQ)Q=QBQ.
		\end{align*}
		Hence
		\[
		X=
		\begin{bmatrix}
			PAP & PBQ\\
			QAP & QBQ
		\end{bmatrix}.
		\]
		The subspaces $P\clh$ and $Q\clh$ reduce $X$ if and only if this block matrix is diagonal, which is equivalent to the vanishing of the two off-diagonal blocks. Therefore,
		\[
		P\clh \ \text{and}\ Q\clh \ \text{reduce } X
		\quad\Longleftrightarrow\quad
		QAP=0 \ \text{and}\ PBQ=0.
		\]
		When these conditions hold, the stated diagonal block representation follows immediately.
		
		Finally, suppose that $A$ commutes with $P$ and $B$ commutes with $Q$.
		Then
		\[
		QAP=QPA=0
		\]
		and
		\[
		PBQ=PQB=0.
		\]
		Thus the off-diagonal blocks vanish, and the reduction follows.
	\end{proof}

	\begin{definition}\label{def:Ktheta-paired}
		Let $\theta$ be a non-constant inner function. For bounded symbols $\vp,\psi\in L^\infty(\T)$, the \emph{$\clk_\theta$-paired operator} is defined as
		\[
		X^{(\theta)}_{\vp,\psi} := M_\vp P_{\clk_\theta} + M_\psi Q_\theta \quad\text{on }L^2,
		\]
		where $P_{\clk_\theta}$ is the orthogonal projection onto the model space $\clk_\theta = H^2 \ominus \theta H^2$ and $Q_\theta = I - P_{\clk_\theta}$ is the complementary orthogonal projection.
	\end{definition}
	
	Since $Q_\theta = I - P_{\clk_\theta}$, we may rewrite this operator as
	\begin{equation}\label{eq:Ktheta-decomp}
		X^{(\theta)}_{\vp,\psi} = M_\psi + M_{\vp-\psi}P_{\clk_\theta}.
	\end{equation}

	We shall use the following elementary multiplier criterion (cf. \cite{GMR}) several times in the sequel.
	
	\begin{lemma}\label{lem:multiplier-criterion}
		Let $\rho\in L^2$. Suppose there exists a constant $M>0$ such that
		\[
		\|\rho p\|_{L^2}\le M\|p\|_{L^2}
		\]
		for every trigonometric polynomial $p$. Then $\rho\in L^\infty(\T)$ and
		\[
		\|\rho\|_\infty\le M.
		\]
	\end{lemma}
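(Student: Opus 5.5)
The plan is to show that $\rho$ acts boundedly on all of $L^2$ and then read off $\|\rho\|_\infty$ from the norm of the resulting operator. First, extend the map $p\mapsto \rho p$ from trigonometric polynomials to all of $L^2$. Trigonometric polynomials are dense in $L^2$, so the hypothesis gives a unique bounded operator $T$ on $L^2$ with $\|T\|\le M$ and $Tp=\rho p$ for every polynomial $p$.

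Next, identify $T$ with pointwise multiplication by $\rho$ on $L^\infty$. Take $f\in L^\infty$. By Fej\'er's theorem (or by taking Cesàro means), there are trigonometric polynomials $p_n\to f$ in $L^2$ with $\|p_n\|_\infty\le\|f\|_\infty$. Passing to a subsequence, we may also assume $p_n\to f$ almost everywhere. Then $\rho p_n\to \rho f$ almost everywhere, while $\rho p_n=Tp_n\to Tf$ in $L^2$. A further subsequence converges almost everywhere to $Tf$, so $Tf=\rho f$. Consequently,
\[
\|\rho f\|_{L^2}\le M\|f\|_{L^2},\qquad f\in L^\infty(\T).
\]

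Finally, test this inequality on indicator functions. Suppose $\|\rho\|_\infty>M$. Then for some $\varepsilon>0$ the set $E=\{|\rho|>M+\varepsilon\}$ has positive measure. Taking $f=\chi_E$ gives
\[
(M+\varepsilon)^2 m(E)\le \int_E|\rho|^2\,dm\le M^2 m(E),
\]
which is impossible. Hence $|\rho|\le M$ almost everywhere, so $\rho\in L^\infty(\T)$ with $\|\rho\|_\infty\le M$.

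The main point needing care is the second step: identifying the abstract extension $T$ with multiplication by $\rho$ on indicator functions. Approximating with uniformly bounded polynomials and taking almost-everywhere convergent subsequences handles this. A dominated-convergence argument would serve equally well, since $|\rho p_n|\le \|f\|_\infty|\rho|\in L^2$.
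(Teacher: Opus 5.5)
Your proof is correct and complete. The paper gives no proof of this lemma: it is stated with a pointer to \cite{GMR}, so there is no in-paper argument to compare your route against. Your three steps are the standard self-contained proof:
\begin{enumerate}
\item extend $p\mapsto\rho p$ to a bounded operator $T$ on $L^2$ with $\|T\|\le M$;
\item identify $Tf=\rho f$ by passing to almost-everywhere convergent subsequences;
\item test on $\chi_E$ with $E=\{|\rho|>M+\varepsilon\}$.
\end{enumerate}
Each step is handled correctly, including the case where $\|\rho\|_\infty$ is a priori infinite.

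One small simplification is available. Your subsequence argument does not need the uniform bound $\|p_n\|_\infty\le\|f\|_\infty$ from Fej\'er means; that bound is only used in the dominated-convergence variant you mention at the end. Any trigonometric polynomials $p_n\to f$ in $L^2$ (partial Fourier sums, say) have an almost-everywhere convergent subsequence. Then $\rho p_n\to\rho f$ almost everywhere along it, while $\rho p_n=Tp_n\to Tf$ in $L^2$. So the same reasoning gives $Tf=\rho f$ for every $f\in L^2$, not just $f\in L^\infty$. Only $f=\chi_E$ is needed in the final step in any case.
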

	
		
		

	We shall also use the following standard description of the commutant of the bilateral shift.
	
	\begin{lemma}\cite[Lemma 4.17]{GMR}\label{lem:commutant-bilateral}
		If $D\in\clb(L^2)$ satisfies
		\[
		DM_z=M_zD,
		\]
		then there exists $\rho\in L^\infty(\T)$ such that
		\[
		D=M_\rho.
		\]
	\end{lemma}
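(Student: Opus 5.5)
The statement to prove is Lemma~\ref{lem:commutant-bilateral}: every $D\in\clb(L^2)$ commuting with $M_z$ is a Laurent multiplication operator. The plan is to take $\rho := D1\in L^2$ as the candidate symbol, show that $D$ agrees with multiplication by $\rho$ on trigonometric polynomials, and then use Lemma~\ref{lem:multiplier-criterion} to get $\rho\in L^\infty(\T)$. The identity $D=M_\rho$ on all of $L^2$ will follow by density.

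First, $M_z$ is unitary with $M_z^{-1}=M_z^*=M_{\bar z}$. From $DM_z=M_zD$ we get $M_z^{*}D=DM_z^{*}$ by multiplying on both sides by $M_z^*$, so $D$ commutes with $M_z^n$ for every $n\in\Z$. Hence for each $n\in\Z$,
\[
D(z^n)=D M_z^n 1 = M_z^n D1 = z^n\rho .
\]
By linearity, $Dp=\rho p$ for every trigonometric polynomial $p$. Since $z^n\rho\in L^2$, the product $\rho p$ lies in $L^2$, and $\|\rho p\|_{L^2}=\|Dp\|_{L^2}\le \|D\|\,\|p\|_{L^2}$. Lemma~\ref{lem:multiplier-criterion} then gives $\rho\in L^\infty(\T)$ with $\|\rho\|_\infty\le\|D\|$, so $M_\rho$ is a bounded operator on $L^2$.

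Finally, $D$ and $M_\rho$ are both bounded and agree on the trigonometric polynomials. These are dense in $L^2$ because $\{z^n\}_{n\in\Z}$ is an orthonormal basis, so $D=M_\rho$.

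The only step with real content is passing from an $L^2$ symbol to an $L^\infty$ one, and Lemma~\ref{lem:multiplier-criterion} handles it directly. If that lemma were not available, I would argue as follows. Apply the estimate to the polynomials $p_N$ approximating $\chi_E\,\overline{\rho}/|\rho|$, where $E=\{|\rho|>\|D\|+\varepsilon\}$, and pass to the limit using Fatou's lemma. This shows $m(E)=0$ for every $\varepsilon>0$.
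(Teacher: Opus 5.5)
Your proof is correct. The paper gives no argument of its own here and simply cites \cite[Lemma 4.17]{GMR}; your route is the standard proof of that result: take $\rho=D1$, use that $D$ commutes with $M_z^{n}$ for all $n\in\Z$ to get $Dp=\rho p$ on trigonometric polynomials, invoke Lemma~\ref{lem:multiplier-criterion} to get $\rho\in L^\infty(\T)$, and conclude by density. The paper uses the same technique in the converse direction of Theorem~\ref{thm:triply-spatial} to recover $\chi$ and $\vp$.
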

	

	\begin{lemma} \label{lem:projection-commutators}
		For every inner function $\theta$,
		\[
		[P_{+},M_z]=1\otimes \bar{z}, \quad [P_{\theta H^2},M_z]=\theta\otimes \bar{z}\theta,
		\]
		and hence
		\[
		[P_{\clk_\theta},M_z]=1\otimes \bar{z}-\theta\otimes \bar{z}\theta.
		\]
	\end{lemma}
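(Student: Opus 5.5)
The plan is to compute the first commutator directly on Fourier coefficients and then obtain the second by conjugating with the unitary $M_\theta$. The third identity then follows by linearity from $P_{\clk_\theta}=P_{+}-P_{\theta H^2}$.

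\emph{Step 1: $[P_+,M_z]=1\otimes\bar z$.} Take $f=\sum_{n\in\Z}\hat f(n)z^n\in L^2$. Then $zf=\sum_n \hat f(n)z^{n+1}$, so
\[
P_{+}(zf)=\sum_{n\ge -1}\hat f(n)z^{n+1}=\hat f(-1)+z\,P_{+}f .
\]
Hence $(P_+M_z-M_zP_+)f=\hat f(-1)\cdot 1$. Since $\hat f(-1)=\ip{f}{\bar z}$, this is exactly $(1\otimes\bar z)f$.

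\emph{Step 2: $[P_{\theta H^2},M_z]=\theta\otimes\bar z\theta$.} We use $P_{\theta H^2}=M_\theta P_+M_{\bar\theta}$. Multiplication operators commute with $M_z$, so
\[
[P_{\theta H^2},M_z]=M_\theta P_+M_zM_{\bar\theta}-M_zM_\theta P_+M_{\bar\theta}=M_\theta[P_+,M_z]M_{\bar\theta}=M_\theta(1\otimes\bar z)M_{\bar\theta}.
\]
By the rule \eqref{eq:tensrule} with $A=M_\theta$ and $C=M_{\bar\theta}$, so that $C^*=M_\theta$, this equals $\theta\otimes\theta\bar z$.

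\emph{Step 3: $[P_{\clk_\theta},M_z]$.} Subtracting the identity of Step 2 from that of Step 1 gives
\[
[P_{\clk_\theta},M_z]=1\otimes\bar z-\theta\otimes\bar z\theta .
\]

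No step is a real obstacle. The only care needed is the direction of the adjoint in the tensor rule, which determines whether the second vector is $\bar z\theta$ or $\bar z\bar\theta$. Since $M_{\bar\theta}^*=M_\theta$, it is $\bar z\theta$.
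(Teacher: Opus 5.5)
Your proposal is correct and follows the paper's proof essentially step for step. You compute $[P_+,M_z]$ on Fourier coefficients, conjugate by $M_\theta$ via $P_{\theta H^2}=M_\theta P_+M_{\bar\theta}$, and subtract. Your check that $M_{\bar\theta}^*=M_\theta$ gives the second vector $\bar z\theta$ is also right.
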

	
	\begin{proof}
		For $f=\sum_{n\in\Z}\widehat f(n)z^n\in L^2$,
		\[
		P_{+}(zf)-zP_{+}f =	\widehat f(-1)\cdot 1 =\ip{f}{\bar{z}}1.
		\]
		Hence
		\[
		[P_{+},M_z]=1\otimes \bar{z}.
		\]
		Next,
		\[
		P_{\theta H^2}=M_\theta P_{+}M_{\overline\theta}.
		\]
		Since $M_\theta$ and $M_z$ commute,
		\[
		\begin{aligned}
			\relax [P_{\theta H^2},M_z] 
			= M_\theta[P_{+},M_z]M_{\overline\theta}  
			=M_\theta(1\otimes \bar{z})M_{\overline\theta}  
			= \theta\otimes \bar{z}\theta.
		\end{aligned}
		\]
		Finally,
		\[
		P_{\clk_\theta}=P_{+}-P_{\theta H^2},
		\]
		and the desired formula follows.
	\end{proof}
	
	The following theorem gives an algebraic characterization of $\clk_\theta$-paired multiplication operators in terms of their commutator with the bilateral shift.

	\begin{theorem}\label{thm:two-component}
		Let $\theta$ be a non-constant inner function and let $a\in L^\infty(\T)$. For $X\in \clb(L^2)$, the following are equivalent:
		\begin{enumerate}
			\item There exists $\beta\in L^\infty(\T)$ such that
			\[
			X=M_\beta+M_aP_{\clk_\theta}.
			\]
			\item The commutator of $X$ with $M_z$ satisfies
			\[
			[X,M_z] = a\otimes\bar{z}-a\theta\otimes\bar{z}\theta.
			\]
		\end{enumerate}
		Consequently, $X$ is of the form
		\[
		X=M_\vp P_{\clk_\theta}+M_\psi Q_\theta
		\]
		for some $\vp,\psi\in L^\infty(\T)$ with $\vp-\psi=a$ if and only if the above commutator identity holds.
	\end{theorem}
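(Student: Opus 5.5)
The plan is to reduce both directions to the commutator computation of Lemma~\ref{lem:projection-commutators}, together with the description of the commutant of $M_z$ in Lemma~\ref{lem:commutant-bilateral}.

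\emph{(1) $\Rightarrow$ (2).} Assume $X=M_\beta+M_aP_{\clk_\theta}$. Since $M_\beta$ and $M_a$ commute with $M_z$,
\[
[X,M_z]=M_a[P_{\clk_\theta},M_z]=M_a\bigl(1\otimes\bar z-\theta\otimes\bar z\theta\bigr).
\]
The first equality uses $M_aP_{\clk_\theta}M_z-M_zM_aP_{\clk_\theta}=M_a(P_{\clk_\theta}M_z-M_zP_{\clk_\theta})$; the second is Lemma~\ref{lem:projection-commutators}. By the tensor rule \eqref{eq:tensrule}, $M_a(p\otimes q)=(ap)\otimes q$, so the right-hand side equals $a\otimes\bar z-a\theta\otimes\bar z\theta$.

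\emph{(2) $\Rightarrow$ (1).} Assume the commutator identity holds and set $D:=X-M_aP_{\clk_\theta}$. This is a bounded operator on $L^2$ because $a\in L^\infty$. By the computation in the first direction, $[M_aP_{\clk_\theta},M_z]=a\otimes\bar z-a\theta\otimes\bar z\theta$. Subtracting this from the hypothesis gives $[D,M_z]=0$. Lemma~\ref{lem:commutant-bilateral} then yields $\beta\in L^\infty(\T)$ with $D=M_\beta$, which is exactly (1).

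\emph{The "consequently" clause.} Suppose $X=M_\vp P_{\clk_\theta}+M_\psi Q_\theta$ with $\vp-\psi=a$. By \eqref{eq:Ktheta-decomp}, $X=M_\psi+M_aP_{\clk_\theta}$, so (1) holds with $\beta=\psi$, and therefore the commutator identity holds. Conversely, if the commutator identity holds, (1) gives $X=M_\beta+M_aP_{\clk_\theta}$. Put $\psi:=\beta$ and $\vp:=\beta+a$; both lie in $L^\infty$, they satisfy $\vp-\psi=a$, and $X=M_\vp P_{\clk_\theta}+M_\psi Q_\theta$.

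There is no real obstacle. The only nontrivial input is Lemma~\ref{lem:commutant-bilateral}, which is quoted from \cite{GMR}. The one point needing care is to keep the tensor conventions consistent, so that multiplying on the left by $M_a$ acts on the first factor of $p\otimes q$ and leaves the second factor unchanged.
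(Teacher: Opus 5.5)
Your proposal is correct and follows essentially the same route as the paper. Both directions rest on the identity $[M_aP_{\clk_\theta},M_z]=M_a[P_{\clk_\theta},M_z]$ together with Lemma~\ref{lem:projection-commutators}. For the converse, the paper also subtracts $M_aP_{\clk_\theta}$ and invokes Lemma~\ref{lem:commutant-bilateral}, and it handles the final clause with the same choice $\psi=\beta$, $\vp=\beta+a$.
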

	
	\begin{proof}
		Assume first that
		\[
		X=M_\beta+M_aP_{\clk_\theta}.
		\]
		Since multiplication operators commute with $M_z$,
		\[
		[X,M_z] =M_a[P_{\clk_\theta},M_z].
		\]
		By Lemma \ref{lem:projection-commutators},
		\[
		[P_{\clk_\theta},M_z]= 	1\otimes\bar{z}-\theta\otimes\bar{z}\theta.
		\]
		Therefore,
		\[
		[X,M_z]= M_a(1\otimes\bar{z}-\theta\otimes\bar{z}\theta)
		=a\otimes\bar{z}-a\theta\otimes\bar{z}\theta.
		\]
		Conversely, suppose that
		\[
		[X,M_z] = a\otimes\bar{z}-a\theta\otimes\bar{z}\theta.
		\]
		Let $Y=M_aP_{\clk_\theta}.$ By the first part of the proof,
		\[
		[Y,M_z]= a\otimes\bar{z}-a\theta\otimes\bar{z}\theta.
		\]
		Hence
		\[
		[X-Y,M_z]=0.
		\]
		By Lemma \ref{lem:commutant-bilateral}, there exists $\beta\in L^\infty(\T)$ such that $X-Y=M_\beta.$
		Thus
		\[
		X=M_\beta+M_aP_{\clk_\theta}.
		\]
		Finally, if $X=M_\vp P_{\clk_\theta}+M_\psi Q_\theta$, then
		\[
		X=M_\psi+M_{\vp-\psi}P_{\clk_\theta},
		\]
		so $a=\vp-\psi$. Conversely, if $X=M_\beta+M_aP_{\clk_\theta}$,
		then
		\[
		X=M_{\beta+a}P_{\clk_\theta}+M_\beta Q_\theta.
		\]
		This completes the proof.
	\end{proof}

	\begin{corollary}\label{cor:analytic-symbol-rigidity}
		Let $\theta$ be a non-constant inner function, and let $\vp,\psi\in L^\infty(\T)$. Consider
		\[
		X_{\vp,\psi}^{(\theta)} = M_\vp P_{\clk_\theta} + M_\psi(I-P_{\clk_\theta}) \quad\text{on }L^2.
		\]
		If the decomposition 
		\[
		L^2=\clk_\theta\oplus\clk_\theta^\perp
		\]
		reduces $X_{\vp,\psi}^{(\theta)}$, then both $\vp$ and $\psi$ are constant. Consequently,
		\[
		X_{\vp,\psi}^{(\theta)} = c_1\,P_{\clk_\theta}+c_2(I-P_{\clk_\theta})
		\]
		for some constants $c_1,c_2\in\C$.
	\end{corollary}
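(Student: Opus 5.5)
The plan is to reduce the statement to two invariance conditions using Theorem~\ref{thm:paired_reduction}, and then to show that a bounded symbol which multiplies $\clk_\theta$ into itself must be constant.

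First I apply Theorem~\ref{thm:paired_reduction} with $P=P_{\clk_\theta}$, $Q=Q_\theta=I-P_{\clk_\theta}$, $A=M_\vp$ and $B=M_\psi$. The reducing hypothesis then gives
\[
Q_\theta M_\vp P_{\clk_\theta}=0 \qquad\text{and}\qquad P_{\clk_\theta} M_\psi Q_\theta=0 .
\]
The first identity says that $\vp\clk_\theta\subseteq\clk_\theta$. Taking adjoints in the second identity gives $Q_\theta M_{\overline\psi}P_{\clk_\theta}=0$, that is, $\overline\psi\,\clk_\theta\subseteq\clk_\theta$. So it suffices to prove one claim: if $\rho\in L^\infty(\T)$ satisfies $\rho\clk_\theta\subseteq\clk_\theta$, then $\rho$ is constant. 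Applying this to $\rho=\vp$ and to $\rho=\overline\psi$ gives the corollary, and the final displayed formula follows at once.

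To prove the claim, I use the canonical conjugation $C_\theta$. For $f\in L^2$ we have
\[
C_\theta(\rho f)=\theta\bar z\,\overline{\rho f}=\overline\rho\,C_\theta f .
\]
Since $C_\theta$ is an involution mapping $\clk_\theta$ onto itself, this gives $\overline\rho\,\clk_\theta=C_\theta(\rho\,\clk_\theta)\subseteq\clk_\theta$. Hence both $\rho k$ and $\overline\rho k$ lie in $H^2$ for every $k\in\clk_\theta$.

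Now I take $k=k_0^\theta=1-\overline{\theta(0)}\theta$, which is nonzero because $\theta$ is non-constant. Since $|\theta(0)|<1$, we have $|k_0^\theta|\ge 1-|\theta(0)|>0$ on $\D$, so $1/k_0^\theta\in H^\infty$. Therefore
\[
\rho=(\rho k_0^\theta)\cdot\frac{1}{k_0^\theta}\in H^2
\qquad\text{and}\qquad
\overline\rho=(\overline\rho k_0^\theta)\cdot\frac{1}{k_0^\theta}\in H^2 .
\]
It follows that $\rho\in H^2\cap\overline{H^2}=\C$, which proves the claim.

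The only step that needs care is obtaining the conjugate invariance $\overline\rho\,\clk_\theta\subseteq\clk_\theta$. Without it, $\rho\clk_\theta\subseteq\clk_\theta$ alone only gives $\rho\in H^2$, hence $\rho\in H^\infty$. The relation between $C_\theta$ and multiplication operators resolves this. The invertibility of $k_0^\theta$ in $H^\infty$ is what allows the division to stay inside $H^2$.
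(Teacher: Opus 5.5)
Your proof is correct. Its first half matches the paper's: the reduction via Theorem~\ref{thm:paired_reduction} to $\vp\,\clk_\theta\subseteq\clk_\theta$ and (after taking adjoints) $\overline\psi\,\clk_\theta\subseteq\clk_\theta$, followed by division by the $H^\infty$-invertible kernel $k_0^\theta$.

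The routes differ only at the last step. The paper uses the division solely to get $\vp\in H^\infty$. It then cites \cite[Theorem~6.12]{GMR}: an $H^\infty$ function that multiplies $\clk_\theta$ into itself must be constant.

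You instead use the intertwining relation $C_\theta M_\rho=M_{\overline\rho}C_\theta$ on $L^2$, together with the fact that $C_\theta$ maps $\clk_\theta$ onto itself. This transfers the invariance from $\rho$ to $\overline\rho$. The same division by $k_0^\theta$ then puts both $\rho$ and $\overline\rho$ in $H^2$, which forces $\rho\in H^2\cap\overline{H^2}=\C$.

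Your version is self-contained, since it uses only facts already recorded in Section~2, and in effect proves the cited multiplier result directly. The paper's version is shorter on the page but depends on that external theorem.
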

	
	\begin{proof}
		By Theorem~\ref{thm:paired_reduction}, the two summands $\clk_\theta$ and $\clk_\theta^\perp $ reduce	$X_{\vp,\psi}^{(\theta)}$ if and only if 
		\[
		(I-P_{\clk_\theta})M_\vp P_{\clk_\theta}=0 \qquad\text{and}\qquad
		P_{\clk_\theta}M_\psi(I-P_{\clk_\theta})=0.
		\]
		that is, to the invariance conditions $\vp\,\clk_\theta \subseteq \clk_\theta$ and $\psi\,\clk_\theta^\perp \subseteq \clk_\theta^\perp$.
		
		These two  conditions precisely say  that
		\[
		M_\vp(\clk_\theta)\subseteq \clk_\theta \qquad \text{and} \qquad  	M_\psi(\clk_\theta^\perp)\subseteq \clk_\theta^\perp, 
		\]
		or equivalently, $\vp\clk_\theta \subseteq\clk_\theta$ and $\psi \clk_\theta^\perp \subseteq\clk_\theta^\perp$.

		Now we show that $\vp\in H^\infty(\D)$. Since $k_0^\theta \in \clk_\theta$, the invariance condition yields $\vp\,k_0^\theta \in \clk_\theta\subseteq H^2$. Now $k_0^\theta=1-\overline{\theta(0)}\theta$, and since $\theta$ is non-constant the Maximum Modulus Theorem gives $|\theta(0)|<1$, so $|\overline{\theta(0)}\theta(z)|<1$ for $z\in\D$. Hence $k_0^\theta$ is invertible in $H^\infty(\D)$ with
		\[
		\frac{1}{k_0^\theta} =\frac{1}{1-\overline{\theta(0)}\theta}\in H^\infty(\D).
		\]
		Therefore,
		\[
		\vp=\bigl(\vp\,k_0^\theta\bigr)\cdot\frac{1}{k_0^\theta}\in H^2.
		\]
		Combined with $\vp\in L^\infty(\T)$, this gives $\vp\in H^2\cap L^\infty=H^\infty(\D)$. Since $\vp\in H^\infty(\D)$ and $\vp\,\clk_\theta\subseteq\clk_\theta$, \cite[Theorem~6.12]{GMR} implies that $\vp$ is constant.
		
		Next, we show that $\psi$ is constant. From $M_\psi\,\clk_\theta^\perp \subseteq\clk_\theta^\perp$, taking adjoints gives $M_{\bar\psi}\,\clk_\theta \subseteq\clk_\theta$, that is, $\bar\psi\,\clk_\theta \subseteq\clk_\theta$. Applying the preceding
		argument to $\bar\psi\in L^\infty(\T)$ yields $\bar\psi\in H^\infty(\D)$,
		and \cite[Theorem~6.12]{GMR} again forces $\bar\psi$ to be constant.
		Hence $\psi$ is constant.
		
		Therefore, both $\vp$ and $\psi$ are constant, and consequently
		\[
		X_{\vp,\psi}^{(\theta)}=c_1\,P_{\clk_\theta}+c_2(I-P_{\clk_\theta})
		\]
		for some $c_1,c_2\in\C$.
	\end{proof}

	\begin{remark}
		Corollary~\ref{cor:analytic-symbol-rigidity} shows that, under the analyticity assumption, a reducing $\clk_\theta$-paired operator is forced to lie in the two-dimensional algebra
		\[
		\operatorname{span}\{I,P_{\clk_\theta}\}
		=
		\{c_1P_{\clk_\theta}+c_2(I-P_{\clk_\theta}):c_1,c_2\in\C\}.
		\]
		Thus the conclusion is stronger than reduction with respect to
		\[
		L^2=\clk_\theta\oplus\clk_\theta^\perp;
		\]
		the operator acts as a scalar multiple of the identity on each of the two summands.
	\end{remark}

	\section{Algebraic characterizations of triply paired operators}

	In this section we pass from the two-component decomposition
	\[
	L^2=\clk_\theta\oplus\clk_\theta^\perp
	\]
	to the finer orthogonal decomposition
	\[L^2=\theta H^2\oplus \clk_\theta\oplus H^2_{-} .\]
	This gives rise to a three-symbol analogue of paired multiplication operators. The aim is to characterize these triply paired operators algebraically in terms of their commutator with the bilateral shift $M_z$, and then to derive a spatial identity that recovers the three symbols from the action of the operator on the three summands. The defect term in this identity is governed by the vector $\widetilde k_0^\theta$, reflecting the fact that multiplication by $z$ does not leave the model space $\clk_\theta$ invariant but instead produces a one-dimensional component in $\theta H^2$.

	\begin{definition}\label{def:triply-paired}
		Let $\theta$ be a non-constant inner function, and let $\vp,\psi,\chi\in L^\infty(\T)$. The triply paired multiplication operator associated with the decomposition
		\begin{equation}\label{eq:triple}
			L^2=\theta H^2\oplus\clk_\theta\oplus H_{-}^2   
		\end{equation}
		is defined by
		\[
		X_{\vp,\psi,\chi}^{(\theta)} = M_\vp P_{\theta H^2}+M_\psi P_{\clk_\theta}+M_\chi P_{-} .
		\]
	\end{definition}

	\begin{theorem} \label{thm:triply-characterization}
		Let $\theta$ be a non-constant inner function and let $a,b\in L^\infty(\T)$. For $X\in\clb(L^2)$, the following are equivalent:
		\begin{enumerate}
			\item There exists $\rho\in L^\infty(\T)$ such that
			\[
			X= M_{\rho+a+b}P_{\theta H^2} + M_{\rho+b}P_{\clk_\theta} +M_\rho P_{-}.
			\]
			\item The commutator of $X$ with $M_z$ satisfies
			\[
			[X,M_z] = a\theta\otimes\bar{z}\theta + b\otimes\bar{z}.
			\]
		\end{enumerate}
		Equivalently, $X$ is a triply paired multiplication operator
		\[
		X=M_\vp P_{\theta H^2}+ M_\psi P_{\clk_\theta}+ M_\chi P_{-}
		\]
		with
		\[
		a=\vp-\psi,
		\qquad
		b=\psi-\chi,
		\]
		if and only if
		\[
		[X,M_z]= (\vp-\psi)\theta\otimes\bar{z}\theta + (\psi-\chi)\otimes\bar{z}.
		\]
	\end{theorem}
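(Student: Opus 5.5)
The plan is to mirror the proof of Theorem~\ref{thm:two-component}, with Lemma~\ref{lem:projection-commutators} as the computational input and Lemma~\ref{lem:commutant-bilateral} as the uniqueness tool. First I would rewrite the operator in (1) using $P_{\theta H^2}+P_{\clk_\theta}+P_{-}=I$ and $P_{\theta H^2}+P_{\clk_\theta}=P_{+}$:
\[
M_{\rho+a+b}P_{\theta H^2}+M_{\rho+b}P_{\clk_\theta}+M_\rho P_{-}
= M_\rho + M_b P_{+} + M_a P_{\theta H^2}.
\]
Set $Y=M_bP_{+}+M_aP_{\theta H^2}$. Since every Laurent multiplication operator commutes with $M_z$, we get $[Y,M_z]=M_b[P_{+},M_z]+M_a[P_{\theta H^2},M_z]$. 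By Lemma~\ref{lem:projection-commutators} and the rule $A(p\otimes q)=(Ap)\otimes q$ from \eqref{eq:tensrule}, this equals $b\otimes\bar z+a\theta\otimes\bar z\theta$. This proves (1)$\Rightarrow$(2), because $[M_\rho,M_z]=0$.

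For (2)$\Rightarrow$(1), suppose $[X,M_z]=a\theta\otimes\bar z\theta+b\otimes\bar z$. The computation above gives $[X-Y,M_z]=0$. Lemma~\ref{lem:commutant-bilateral} then yields $\rho\in L^\infty(\T)$ with $X-Y=M_\rho$. Expanding back via the identity in the first step gives exactly the form in (1).

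For the final equivalence, let $X=M_\vp P_{\theta H^2}+M_\psi P_{\clk_\theta}+M_\chi P_{-}$. Put $\rho=\chi$, $b=\psi-\chi$ and $a=\vp-\psi$, so that $\rho+b=\psi$ and $\rho+a+b=\vp$; the commutator formula then follows from (1)$\Rightarrow$(2). Conversely, if the commutator identity holds with $a=\vp-\psi$ and $b=\psi-\chi$, then (2)$\Rightarrow$(1) produces some $\rho$. In that case $X$ is triply paired with symbols $(\rho+a+b,\rho+b,\rho)$, and these have the prescribed differences.

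There is no real obstacle here. The only points needing care are bookkeeping:
\begin{itemize}
\item the order of the factors in the tensors, where left multiplication by $M_a$ acts on the first slot;
\item the requirement that $a,b\in L^\infty$, which makes $Y$ bounded so that Lemma~\ref{lem:commutant-bilateral} applies to $X-Y\in\clb(L^2)$.
\end{itemize}
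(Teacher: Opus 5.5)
Your proposal is correct and follows essentially the same route as the paper: you rewrite $X$ as $M_\rho+M_bP_{+}+M_aP_{\theta H^2}$ and compute the commutator via Lemma~\ref{lem:projection-commutators}, and for the converse you subtract $Y=M_aP_{\theta H^2}+M_bP_{+}$ and apply Lemma~\ref{lem:commutant-bilateral}. Your reading of the final equivalence also matches the paper's: the recovered symbols $(\rho+a+b,\rho+b,\rho)$ need only have the prescribed differences, not coincide with $\vp,\psi,\chi$.
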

	
	\begin{proof}
		Assume that $X= M_{\rho+a+b}P_{\theta H^2} + M_{\rho+b}P_{\clk_\theta} + M_\rho P_{-}.$ Using   $P_{+}=P_{\theta H^2}+P_{\clk_\theta}$ and $ I=P_{\theta H^2}+P_{\clk_\theta}+P_{-}$, we may rewrite $X$ as
		\[
		\begin{aligned}
			X &= M_\rho I+M_bP_{+} + M_aP_{\theta H^2}.
		\end{aligned}
		\]
		Thus, we have
		\[
		[X,M_z]= M_b[P_{+},M_z]+M_a[P_{\theta H^2},M_z],
		\]
		because multiplication operators commute with $M_z$. Now, by Lemma \ref{lem:projection-commutators},
		\[
		[P_{+},M_z]=1\otimes\bar{z}
		\quad \text{and}
		\quad 
		[P_{\theta H^2},M_z]=\theta\otimes\bar{z}\theta.
		\]
		Therefore,
		\[
		[X,M_z]=b\otimes\bar{z}+a\theta\otimes\bar{z}\theta.
		\]
		
		Conversely, suppose $[X,M_z]= a\theta\otimes\bar{z}\theta+b\otimes\bar{z}$. Let $Y=M_aP_{\theta H^2}+M_bP_{+}.$ Then, by the above computation,
		\[
		[Y,M_z]=a\theta\otimes\bar{z}\theta+b\otimes\bar{z}.
		\]
		Hence
		\[
		[X-Y,M_z]=0.
		\]
		By Lemma \ref{lem:commutant-bilateral}, there exists $\rho\in L^\infty(\T)$ such that $X-Y=M_\rho$.
		Thus
		\[
		X=M_\rho+M_bP_{+} + M_aP_{\theta H^2}.
		\]
		Using $P_{+}=P_{\theta H^2}+P_{\clk_\theta}$ and $I=P_{\theta H^2}+P_{\clk_\theta}+P_{-}$, we obtain
		\[
		\begin{aligned}
			X &= M_\rho(P_{\theta H^2}+P_{\clk_\theta}+P_{-}) + M_b(P_{\theta H^2}+P_{\clk_\theta})+ M_aP_{\theta H^2}  \\
			&= M_{\rho+a+b}P_{\theta H^2}+ M_{\rho+b}P_{\clk_\theta} + M_\rho P_{-}.
		\end{aligned}
		\]
		This proves the equivalence.
		
		Finally, putting
		\[
		\vp=\rho+a+b,\qquad \psi=\rho+b,\qquad \chi=\rho
		\]
		gives
		\[
		a=\vp-\psi, \qquad b=\psi-\chi.
		\]
		The final assertion follows.
	\end{proof}

	\begin{lemma}\label{lem:proj}
		For every $f\in \clk_{\theta}$, $P_{\theta H^2}(zf)=\langle f,\tilde{k}_0^{\theta}\rangle\,\theta$.
	\end{lemma}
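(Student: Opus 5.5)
Since $f\in\clk_\theta\subseteq H^2$, we have $zf\in H^2$. Hence $P_{\theta H^2}(zf)$ can be computed with the formula $P_{\theta H^2}=M_\theta P_{+}M_{\bar\theta}$ recorded in Section~\ref{sec:prelim}. This gives
\[
P_{\theta H^2}(zf)=\theta\,P_{+}(\bar\theta z f).
\]
The first step is therefore to show that $P_{+}(\bar\theta zf)$ is a constant function.

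For this we use the description $\clk_\theta=H^2\cap\theta\overline{zH^2}$, which follows from $f\perp\theta H^2$: for every $h\in H^2$, $\ip{\bar\theta f}{h}=\ip{f}{\theta h}=0$, so $\bar\theta f\in H^2_{-}=\overline{zH^2}$. Then $\bar\theta z f\in z\overline{zH^2}=\overline{H^2}$, so its Fourier coefficients vanish at all positive indices. Consequently $P_{+}(\bar\theta zf)=\widehat{(\bar\theta zf)}(0)$, and this constant equals
\[
\ip{\bar\theta zf}{1}=\ip{f}{\bar z\theta}.
\]

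The second step is to identify $\ip{f}{\bar z\theta}$ with $\ip{f}{\widetilde k_0^\theta}$. By definition,
\[
\widetilde k_0^\theta=T_z^*\theta=\bar z(\theta-\theta(0)),
\]
so $\bar z\theta=\widetilde k_0^\theta+\theta(0)\bar z$. Since $f\in H^2$, we have $\ip{f}{\bar z}=\widehat f(-1)=0$. Hence $\ip{f}{\bar z\theta}=\ip{f}{\widetilde k_0^\theta}$, and therefore
\[
P_{\theta H^2}(zf)=\ip{f}{\widetilde k_0^\theta}\,\theta.
\]

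As an alternative route, one could apply the identity $P_{\theta H^2}M_zP_{\clk_\theta}=P_{\theta H^2}\bigl([M_z,P_{\clk_\theta}]+P_{\clk_\theta}M_z\bigr)P_{\clk_\theta}$ together with Lemma~\ref{lem:projection-commutators}. Since $P_{\theta H^2}P_{\clk_\theta}=0$, only the commutator term survives, and using $P_{\theta H^2}1=\theta\,\overline{\theta(0)}$ this reproduces the same formula.

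The only delicate point is the sign and conjugation bookkeeping in the constant term. In particular, one must check that the extra $\bar z$ contribution vanishes because $f$ is analytic. Everything else is direct.
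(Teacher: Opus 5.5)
Your proof is correct and follows the paper's argument. Both proofs first derive $P_{\theta H^2}(zf)=\langle f,\bar z\theta\rangle\,\theta$ from $f\perp\theta H^2$; you do this via $P_{\theta H^2}=M_\theta P_{+}M_{\bar\theta}$ and $\bar\theta zf\in\overline{H^2}$, while the paper tests against $g=\theta\bigl(h(0)+zh_1\bigr)$. Both then replace $\bar z\theta$ by $\widetilde k_0^\theta$ using $\langle f,\bar z\rangle=0$. (In your alternative route, the $1\otimes\bar z$ term is already killed because $P_{\clk_\theta}\bar z=0$, so you do not need the value of $P_{\theta H^2}1$.)
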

	
	\begin{proof}
		We find the unique $v\in\theta H^2$ with $\langle zf,g\rangle=\langle v,g
		\rangle$ for all $g\in\theta H^2$. Write $g=\theta h$, $h\in H^2$, and
		$h=h(0)+zh_{1}$; then $h(0)=\langle h,1\rangle=\langle\theta h,\theta\rangle
		=\langle g,\theta\rangle$. Since $f\perp\theta H^2$,
		\[
		\langle zf,g\rangle=\langle f,\bar{z}\theta h\rangle
		=\overline{h(0)}\,\langle f,\bar{z}\theta\rangle
		=\langle\langle f,\bar{z}\theta\rangle\theta,g\rangle .
		\]
		Thus $P_{\theta H^2}(zf)=\langle f,\bar{z}\theta\rangle\theta$. To rewrite this
		in terms of $\tilde{k}_0^{\theta}$, we note that for any $f\in H^{2}$ one has $\langle f,
		\bar{z}\rangle=0$. Hence
		\[
		\langle f,\bar{z}\theta\rangle
		=\langle f,\bar{z}(\theta-\theta(0))\rangle
		=\langle f,\tilde{k}_0^{\theta}\rangle,
		\]
		since on $\T$ one has $\bar{z}(\theta-\theta(0))=(\theta-\theta(0))/z
		=T_z^*\theta=\tilde{k}_0^{\theta}$. Therefore, $P_{\theta H^2}(zf)=\langle f,\tilde{k}_0^{\theta}\rangle\theta$.
	\end{proof}

	\begin{theorem}\label{thm:triply-spatial}
		$X\in\clb (L^2)$ is a triply paired operator $X^{(\theta)}_{\vp,\psi,\chi}$ for some $\vp,\psi,\chi\in L^{\infty}(\T)$ if and only if there exists $w\in L^\infty(\T)$ with
		\begin{equation}\label{eq:tsp}
			X=M_z^* XM_zP_{+}+M_z XM_z^*P_{-}-\bigl(w\otimes \tilde{k}_0^{\theta}\bigr)P_{\clk_{\theta}} .
		\end{equation}
		When these hold, the symbols are recovered by
		\[
		\chi=zX(\bar{z}), \qquad \vp=\overline{\theta}\,X(\theta),\quad \psi=\vp-z\overline{\theta}\,w.
		\]
	\end{theorem}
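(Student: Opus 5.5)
The plan is to prove the forward direction by computing the two compressed terms on the right of \eqref{eq:tsp} directly for $X=X^{(\theta)}_{\vp,\psi,\chi}$. For the converse, we read off $\chi$ and $\vp$ from \eqref{eq:tsp} restricted to $H^2_-$ and $\theta H^2$, then subtract the resulting triply paired operator and use that $S_\theta$ is of class $C_{00}$ to handle the part on $\clk_\theta$.

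\emph{Forward direction.} Let $X=X^{(\theta)}_{\vp,\psi,\chi}$, and test $M_z^*XM_zP_+$ on each summand of $H^2$.
\begin{itemize}
\item If $f\in\theta H^2$, then $zf\in\theta H^2$, so $\bar z X(zf)=\vp f$.
\item If $f\in\clk_\theta$, then $zf\in H^2$, so $P_-(zf)=0$. By Lemma~\ref{lem:proj}, $zf=P_{\clk_\theta}(zf)+\langle f,\widetilde k_0^\theta\rangle\theta$. Hence
\[
\bar z X(zf)=\psi f+(\vp-\psi)\bar z\theta\,\langle f,\widetilde k_0^\theta\rangle .
\]
\end{itemize}
For $f\in H^2_-$ we have $\bar z f\in H^2_-$, so $zX(\bar z f)=\chi f$. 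Combining these,
\[
M_z^*XM_zP_++M_zXM_z^*P_-=X+\bigl(w\otimes\widetilde k_0^\theta\bigr)P_{\clk_\theta},\qquad w=(\vp-\psi)\bar z\theta\in L^\infty(\T),
\]
which is \eqref{eq:tsp}. Solving for $\psi$ gives $\psi=\vp-z\overline\theta w$. Applying $X$ to $\bar z\in H^2_-$ gives $X(\bar z)=\chi\bar z$, so $\chi=zX(\bar z)$. Applying $X$ to $\theta\in\theta H^2$ gives $X(\theta)=\vp\theta$, so $\vp=\overline\theta X(\theta)$.

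\emph{Converse.} Assume \eqref{eq:tsp} holds with some $w\in L^\infty(\T)$.
\begin{enumerate}
\item On $H^2_-$, \eqref{eq:tsp} reads $Xf=zX(\bar z f)$. Iterating, $X(\bar z^{n})=\chi\bar z^{n}$ for $n\ge1$, where $\chi:=zX(\bar z)\in L^2$.
\begin{itemize}
\item By linearity, $\|\chi q\|\le\|X\|\|q\|$ for every polynomial $q$ in $\bar z$ without constant term.
\item Any trigonometric polynomial $p$ equals $z^N q$ for such a $q$, and multiplication by $z^N$ preserves norms.
\item Lemma~\ref{lem:multiplier-criterion} then gives $\chi\in L^\infty(\T)$, and by boundedness and density $XP_-=M_\chi P_-$.
\end{itemize}
\item On $\theta H^2$, both $P_{\clk_\theta}$ and $P_-$ vanish, so \eqref{eq:tsp} gives $Xf=\bar zX(zf)$. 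Hence $X(\theta z^n)=z^n X(\theta)=\vp\theta z^n$ with $\vp:=\overline\theta X(\theta)$. The same multiplier argument, using $|\theta|=1$, gives $\vp\in L^\infty(\T)$ and $XP_{\theta H^2}=M_\vp P_{\theta H^2}$.
\item Set $\psi:=\vp-z\overline\theta w\in L^\infty(\T)$ and $Y:=X-X^{(\theta)}_{\vp,\psi,\chi}$.
\end{enumerate}

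The main obstacle is the $\clk_\theta$ component, since \eqref{eq:tsp} only relates $Xf$ to $X(zf)$ and $zf$ leaves $\clk_\theta$. The subtraction in step 3 is designed to remove this coupling. By the forward computation, $X^{(\theta)}_{\vp,\psi,\chi}$ satisfies \eqref{eq:tsp} with the vector $(\vp-\psi)\bar z\theta$, and this equals $w$ by the choice of $\psi$. Since \eqref{eq:tsp} is linear in the pair $(X,w)$, $Y$ satisfies \eqref{eq:tsp} with vector $0$. Moreover $Y$ vanishes on $\theta H^2\oplus H^2_-$ by steps 1 and 2.

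Now take $f\in\clk_\theta$. Then
\[
Yf=\bar z\,Y(zf)=\bar z\,Y\bigl(S_\theta f+\langle f,\widetilde k_0^\theta\rangle\theta\bigr)=\bar z\,Y(S_\theta f),
\]
because $Y\theta=0$. Iterating gives $Yf=\bar z^{\,n}Y(S_\theta^n f)$, so
\[
\|Yf\|\le\|Y\|\,\|S_\theta^nf\|\to0
\]
because $S_\theta$ is of class $C_{00}$. Thus $Y=0$, i.e. $X=X^{(\theta)}_{\vp,\psi,\chi}$ with the stated symbols.
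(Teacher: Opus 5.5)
Your proposal is correct and follows the paper's proof: the same summand-by-summand forward computation, the same recovery of $\chi$ and $\vp$ through the multiplier lemma, and the same definition $\psi=\vp-z\overline{\theta}w$ followed by a $C_{00}$ iteration on $\clk_\theta$. The one difference is that you obtain the homogeneous identity for $Y=X-X^{(\theta)}_{\vp,\psi,\chi}$ (which equals the paper's $D$) from the linearity of \eqref{eq:tsp} in $(X,w)$ together with the forward direction. The paper instead verifies $D=M_z^*DM_zP_{\clk_\theta}$ by direct computation, so your route is a valid shortcut.
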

	
	\begin{proof}
		$(\Rightarrow)$ Assume
		\[
		X=M_{\vp}P_{\theta H^2}+M_{\psi}P_{\clk_{\theta}}+M_{\chi}P_{-}.
		\]
		Recall that
		\[
		P_{+}=P_{\theta H^2}+P_{\clk_{\theta}}.
		\]
		
		First let $f\in H^2_{-}$. Since $H^2_{-}$ is invariant under $M_z^*$ and $\restr{X}{H^2_{-}}=M_\chi$, we have
		\[
		M_zXM_z^*f = M_zM_\chi M_z^*f=M_\chi f=Xf.
		\]
		Hence
		\[
		XP_{-}=M_zXM_z^*P_{-}.
		\]
		
		Next let $f\in \theta H^2$. Since $\theta H^2$ is invariant under $M_z$ and $\restr{X}{\theta H^2}=M_\vp$, we get
		\[
		M_z^*XM_zf =M_z^*M_\vp M_zf=M_\vp f=Xf.
		\]
		Thus
		\[
		XP_{\theta H^2}=M_z^*XM_zP_{\theta H^2}.
		\]
		
		Finally let $f\in \clk_\theta$. By Lemma~\ref{lem:proj},
		\[
		M_z f=S_\theta f+\langle f,\tilde k_0^\theta\rangle\theta,
		\]
		where $S_\theta f\in\clk_\theta$ and $\langle f,\tilde k_0^\theta\rangle\theta\in\theta H^2$. Therefore,
		\[
		XM_z f = M_\psi S_\theta f + \langle f,\tilde k_0^\theta\rangle M_\vp\theta.
		\]
		Since $S_\theta f=zf-\langle f,\tilde k_0^\theta\rangle\theta$, we obtain
		\[
		XM_z f =\psi zf+\langle f,\tilde k_0^\theta\rangle(\vp-\psi)\theta.
		\]
		Applying $M_z^*$ gives
		\[
		M_z^*XM_z f=\psi f+\overline z(\vp-\psi)\theta
		\langle f,\tilde k_0^\theta\rangle.
		\]
		But $Xf=\psi f$ for $f\in\clk_\theta$. Hence
		\[
		M_z^*XM_z f= Xf+w\langle f,\tilde k_0^\theta\rangle,
		\]
		where
		\[
		w:=(\vp-\psi)\overline z\,\theta\in L^\infty(\T).
		\]
		Equivalently,
		\[
		XP_{\clk_\theta} = M_z^*XM_zP_{\clk_\theta}- (w\otimes\tilde k_0^\theta)P_{\clk_\theta}.
		\]
		
		Combining the three identities over the orthogonal decomposition $L^2(\T)=\theta H^2\oplus\clk_\theta\oplus H^2_{-}$ yields
		\[
		X=M_z^*XM_zP_++M_zXM_z^*P_{-}-(w\otimes\tilde k_0^\theta)P_{\clk_\theta},
		\]
		which is precisely \eqref{eq:tsp}.

		$(\Leftarrow)$ Assume  the identity \eqref{eq:tsp} holds. We recover the three symbols $\chi,\vp,\psi$ by isolating the three components of the decomposition \eqref{eq:triple} in turn.
		
		Right-multiplying \eqref{eq:tsp} by $P_{-}$ gives
		\[
		XP_{-}=M_zXM_z^*P_{-}.
		\]
		Multiplying on the left by $M_z^*$, we obtain
		\[
		M_z^*XP_{-}=XM_z^*P_{-}.
		\]
		Thus, on $H^2_{-}$, the operator $X$ intertwines the restriction of $M_z^*$ to $H^2_{-}$. Since
		\[
		M_z^*\bar{z}^m=\bar{z}^{m+1},\qquad m\ge 1,
		\]
		it follows by induction that
		\[
		X(\bar{z}^m)=\bar{z}^{m-1}X(\bar{z}),\qquad m\ge 1.
		\]
		Set
		\[
		\alpha:=X(\bar{z})\in L^2,
		\qquad
		\chi:=z\alpha=zX(\bar{z}).
		\]
		Then
		\[
		X(\bar{z}^m)=\chi\,\bar{z}^m,\qquad m\ge 1.
		\]
		Hence, for every polynomial $p$ in $H^2_{-}$, we have
		\[
		Xp=\chi p.
		\]
		Moreover,
		\[
		\|\chi p\|=\|Xp\|\le \|X\|\,\|p\|.
		\]
		
		We now extend this estimate to arbitrary trigonometric polynomials. Let $r$ be a trigonometric polynomial. Choose $N$ sufficiently large so that $z^{-N}r\in H^2_{-}$. Then
		\[
		\|\chi r\|
		=
		\|z^{-N}\chi r\|
		=
		\|\chi z^{-N}r\|
		\le
		\|X\|\,\|z^{-N}r\|
		=
		\|X\|\,\|r\|.
		\]
		By Lemma~\ref{lem:multiplier-criterion}, $\chi\in L^\infty(\T)$. Since $X$ agrees with $M_\chi$ on the dense set of polynomials in $H^2_{-}$, we conclude that
		\[
		XP_{-}=M_\chi P_{-}.
		\]

		Again, right-multiplying \eqref{eq:tsp}
		by $P_{{\theta}H^2}$ gives $XP_{{\theta}H^2}=M_z^* XM_zP_{{\theta}H^2}$. 
		Hence $M_z XP_{{\theta}H^2}=XM_zP_{{\theta}H^2}$, and inducting on $n\ge0$ yields $X(z^{n}\theta)=z^{n}X(\theta)$. Set $\vp:=\overline{\theta}X(\theta)\in L^2$; since $|\theta|=1$ a.e.\
		on $\T$, $X(\theta)=\theta\vp$, and by linearity
		\begin{equation}\label{eq:Xthetap}
			X(\theta p)=\theta\vp\, p\qquad\text{for every analytic polynomial }p.
		\end{equation}
		We claim $\vp\in L^{\infty}(\T)$. By \eqref{eq:Xthetap},
		$\|\theta\vp p\|=\|X(\theta p)\|\le\|X\|\,\|\theta p\|=\|X\|\,\|p\|$
		for analytic polynomials $p$, and using $|\theta|=1$ a.e.,
		$\|\vp p\|\le\|X\|\,\|p\|$ for analytic $p$.
		
		To extend the inequality to all trigonometric polynomials, let
		\[
		p(z)=\sum_{k=-m}^{n} c_k z^k
		\]
		be arbitrary. Choose $N\ge m$. Then
		\[
		q(z):=z^N p(z)=\sum_{k=-m}^{n} c_k z^{k+N}
		\]
		is an analytic polynomial. Since multiplication by \(z^{-N}\) is unitary on $L^2(\T)$, we have $p=z^{-N}q$ and $\|p\|=\|q\|$. Therefore
		\[
		\|\vp p\| = \|\vp z^{-N}q\| =
		\|z^{-N}\vp q\| = \|\vp q\| \le \|X\| \|q\|= \|X\| \|p\|.
		\]
		By Lemma~\ref{lem:multiplier-criterion}, $\vp\in L^{\infty}(\T)$ with $\|\vp\|_{\infty}\le\|X\|$. Hence 
		\[
		XP_{{\theta}H^2}=M_{\vp}P_{{\theta}H^2}
		\]
		on the dense set of $\theta p$ with $p$ analytic, and extends uniquely by continuity.

		Now, right-multiplying \eqref{eq:tsp} by $P_{\clk_{\theta}}$ gives
		\begin{equation}\label{eq:XPK}
			XP_{\clk_{\theta}}=M_z^* XM_zP_{\clk_{\theta}}-(w\otimes  \tilde{k}_0^{\theta} )P_{\clk_{\theta}}.
		\end{equation}
		
		Since $w\in L^\infty(\T)$, $\overline z\theta$ is unimodular, and $\vp\in L^\infty(\T)$ by the preceding part, define
		\begin{equation}\label{eq:psi-def}
			\psi:=\vp-z\overline\theta\,w\in L^\infty(\T).
		\end{equation}
		Then
		\[
		w=(\vp-\psi)\overline z\theta.
		\]

		Define $D:=(X-M_{\psi}P_{\clk_{\theta}})P_{\clk_{\theta}}$. Then
		\begin{equation}\label{eq:Dvan}
			D(\theta H^2)=0, \quad \restr{D}{H^2_{-}}=0.
		\end{equation}
		We claim
		\begin{equation}\label{eq:Dident}
			D=M_z^* DM_zP_{\clk_{\theta}}.
		\end{equation}
		It suffices to check $Df=M_z^* DM_z f$ for every
		$f\in \clk_{\theta}$.

		Fix $f\in \clk_{\theta}$. By Lemma~\ref{lem:proj},
		$M_z f=  S_\theta  f+\langle f,\tilde{k}_0^{\theta}\rangle\theta$ with $  S_\theta  f\in \clk_{\theta}$ and
		$\langle f,\tilde{k}_0^{\theta}\rangle\theta\in \theta H^2$. Since $D$ vanishes on $\theta H^2$ by \eqref{eq:Dvan}, we have
		\begin{equation}\label{eq:Dstep1}
			DM_z f=D  S_\theta  f=(X-M_{\psi}P_{\clk_{\theta}})  S_\theta  f=X(  S_\theta  f)-\psi\,  S_\theta  f.
		\end{equation}
		Applying $M_z^*$, \begin{equation}\label{eq:Dstep2}
			M_z^* DM_z f=M_z^* X(  S_\theta  f)-\bar{z}\,\psi\,  S_\theta  f.
		\end{equation}
		
		To simplify $M_z^* X(  S_\theta  f)$, applying the spatial identity \eqref{eq:tsp} to $f$, we obtain
		\[
		X f=M_z^* XM_z f-\langle f,  \tilde{k}_0^{\theta} \rangle\,w.
		\]
		Decompose $M_z f$ as above; using $\restr{X}{\theta H^2}=M_{\vp}$ from above,
		\[
		XM_z f=X(  S_\theta  f)+\langle f,\tilde{k}_0^{\theta}\rangle\,X(\theta)
		=X(  S_\theta  f)+\langle f,\bar{z}\theta\rangle\,\vp\,\theta,
		\]
		by Lemma~\ref{lem:proj}. Writing
		$c:=\langle f,\tilde{k}_0^{\theta}\rangle=\langle f,\bar{z}\theta\rangle$ and substituting, we obtain 
		$X f=M_z^* X(  S_\theta  f)+c\,\bar{z}\,\vp\,\theta-c\,w,$ which gives
		\[
		M_z^* X(  S_\theta  f)=X f-c\,\bar{z}\,\vp\,\theta+c\,w.
		\]
		By using $w=(\vp-\psi)\bar{z}\theta$, we have
		\begin{equation}\label{eq:Dstep3}
			M_z^* X(  S_\theta  f)=X f-c\,\bar{z}\,\theta\,\psi.
		\end{equation}
		Finally, substituting \eqref{eq:Dstep3} into \eqref{eq:Dstep2} and using 
		$\bar{z}\,\psi\,  S_\theta  f =\bar{z}\,\psi\bigl(zf-c\,\theta\bigr)
		=\psi f-c\,\bar{z}\,\theta\,\psi$, and, therefore
		\[
		M_z^* DM_z f=X f-\psi f=(X-M_{\psi})f=(X-M_{\psi}P_{\clk_{\theta}})P_{\clk_{\theta}} f=Df,
		\]
		and hence this proves \eqref{eq:Dident}.

		For $f\in\clk_\theta$, iterating \eqref{eq:Dident} and using $M_zf=S_\theta f+\langle f,\widetilde k_0^\theta\rangle\theta$, together with $\restr{D}{\theta H^2}=0$, yields
		\[
		Df=(M_z^*)^nD S_\theta^n f,\qquad n\ge1.
		\]
		Since $M_z^*$ is unitary on $L^2$,
		\[
		\|Df\|=\|(M_z^*)^{n}D  S_\theta ^{n}f\|\le\|D\|\,\|  S_\theta ^{n}f\|.
		\]
		The compressed shift $  S_\theta $ lies in the class $C_{00}$ (cf. \cite[Proposition 4.3]{NF}), hence $  S_\theta ^{n}f\to0$ in norm as $n\to\infty$; thus $\|Df\|=0$, that is, $D=0$ on $ \clk_{\theta}$. Combined with \eqref{eq:Dvan},
		$D=0$ as an operator on $L^{2}$, so $XP_{\clk_{\theta}}=M_{\psi}P_{\clk_{\theta}}$ with $\psi\in L^\infty(\T)$ already fixed by \eqref{eq:psi-def}.
		
		Summing the three pieces over the orthogonal decomposition
		\eqref{eq:triple} gives $X=M_{\vp}P_{{\theta}H^2}+M_{\psi}P_{\clk_{\theta}}+M_{\chi}P_{-}
		=X^{(\theta)}_{\vp,\psi,\chi}$, as required.
	\end{proof}

	Thus the triply paired structure is encoded both by the first-order commutator identity in Theorem~\ref{thm:triply-characterization} and by the spatial recovery formula in Theorem~\ref{thm:triply-spatial}. The latter will be useful when comparing paired decompositions with Toeplitz--Hankel type decompositions on the model space.

	\section{Sum of truncated Toeplitz and truncated Hankel operators}
	
	The rank-two displacement identities of Sarason \cite{Sarason2007} for TTOs and of Gu--Ma \cite{Gu:Ma} for THOs (cf.\ Section~\ref{sec:prelim}) involve the compressed shift $S_\theta$ and the boundary kernels $k_0^\theta$ and $\wt{k}_0^\theta$. The Toeplitz and Hankel displacement maps do not commute, so the natural object for studying their sum is the
	\emph{second-order} displacement $D_H D_T$, which we develop in this section.

	\begin{lemma}\label{lem:sarason-equiv}
		For any bounded linear operator $A\in\clb(\clk_\theta)$, the identity
		\[
		A - S_\theta A S_\theta^*
		=
		\psi \otimes k_0^\theta + k_0^\theta \otimes \chi
		\]
		for some $\psi,\chi\in\clk_\theta$ holds if and only if the identity
		\[
		A S_\theta - S_\theta A
		=
		u \otimes \tilde{k}_0^\theta + k_0^\theta \otimes v
		\]
		holds for some $u,v\in\clk_\theta$. More precisely, one may choose
		\[
		u = -(S_\theta A \tilde{k}_0^\theta + \theta(0) \psi)
		\quad \text{and} \quad
		v = S_\theta^* \chi
		\]
		in one direction, and
		\[
		\psi = A k_0^\theta - \overline{\theta(0)} u
		\quad \text{and} \quad
		\chi = S_\theta v
		\]
		in the reverse direction.
	\end{lemma}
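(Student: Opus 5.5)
Both directions are direct algebraic manipulations: multiply the given identity on the right by $S_\theta$ or by $S_\theta^*$, then remove the resulting products $S_\theta^*S_\theta$ or $S_\theta S_\theta^*$ with the defect identities \eqref{eq:defect_forward}--\eqref{eq:defect_backward}. The kernel relations \eqref{eq:asymA}--\eqref{eq:asymB} then convert the leftover rank-one pieces into the required form. Throughout, the tensor rules \eqref{eq:tensrule} move operators and scalars across $\otimes$.

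\emph{Forward direction.} Assume $A-S_\theta AS_\theta^*=\psi\otimes k_0^\theta+k_0^\theta\otimes\chi$ and multiply on the right by $S_\theta$. By \eqref{eq:tensrule} the right side becomes $\psi\otimes S_\theta^*k_0^\theta+k_0^\theta\otimes S_\theta^*\chi$. On the left side, \eqref{eq:defect_backward} gives $S_\theta AS_\theta^*S_\theta=S_\theta A-(S_\theta A\tilde k_0^\theta)\otimes\tilde k_0^\theta$. Next, \eqref{eq:asymA} gives $S_\theta^*k_0^\theta=-\overline{\theta(0)}\tilde k_0^\theta$, so the conjugate-linear rule yields $\psi\otimes S_\theta^*k_0^\theta=-\theta(0)\,\psi\otimes\tilde k_0^\theta$. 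Collecting terms,
\[
AS_\theta-S_\theta A=-\bigl(S_\theta A\tilde k_0^\theta+\theta(0)\psi\bigr)\otimes\tilde k_0^\theta+k_0^\theta\otimes S_\theta^*\chi,
\]
which is the claimed identity with $u=-(S_\theta A\tilde k_0^\theta+\theta(0)\psi)$ and $v=S_\theta^*\chi$.

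\emph{Reverse direction.} Assume $AS_\theta-S_\theta A=u\otimes\tilde k_0^\theta+k_0^\theta\otimes v$ and multiply on the right by $S_\theta^*$. By \eqref{eq:defect_forward}, $AS_\theta S_\theta^*=A-(Ak_0^\theta)\otimes k_0^\theta$, so the left side equals $A-S_\theta AS_\theta^*-(Ak_0^\theta)\otimes k_0^\theta$. The right side becomes $u\otimes S_\theta\tilde k_0^\theta+k_0^\theta\otimes S_\theta v$. By \eqref{eq:asymB}, $S_\theta\tilde k_0^\theta=-\theta(0)k_0^\theta$, hence $u\otimes S_\theta\tilde k_0^\theta=-\overline{\theta(0)}\,u\otimes k_0^\theta$. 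Rearranging gives
\[
A-S_\theta AS_\theta^*=\bigl(Ak_0^\theta-\overline{\theta(0)}u\bigr)\otimes k_0^\theta+k_0^\theta\otimes S_\theta v,
\]
that is, $\psi=Ak_0^\theta-\overline{\theta(0)}u$ and $\chi=S_\theta v$. Both vectors lie in $\clk_\theta$ because $A$ and $S_\theta$ map $\clk_\theta$ into itself.

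There is no real obstacle. The only points needing care are the conjugations when a scalar is pulled out of the second tensor slot (which produces $\theta(0)$ in one direction and $\overline{\theta(0)}$ in the other), and writing out the defect identities correctly inside the products.
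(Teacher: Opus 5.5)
Your proposal is correct and follows the paper's proof essentially step for step. In both directions you right-multiply by $S_\theta$ (respectively $S_\theta^*$), remove $S_\theta^*S_\theta$ (respectively $S_\theta S_\theta^*$) using \eqref{eq:defect_backward} (respectively \eqref{eq:defect_forward}), and convert the leftover term with \eqref{eq:asymA} (respectively \eqref{eq:asymB}), handling the conjugation of $\theta(0)$ exactly as the paper does.
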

	
	\begin{proof}
		$(\Rightarrow)$ Assume $A - S_\theta A S_\theta^* = \psi \otimes k_0^\theta + k_0^\theta \otimes \chi$. Right-multiplying the entire equation by the forward shift $S_\theta$, and then applying the relations \eqref{eq:defect_backward}, \eqref{eq:asymA} yields
		\begin{equation}
			\begin{split}
				A S_\theta - S_\theta A (S_\theta^* S_\theta) &= (\psi \otimes k_0^\theta)S_\theta + (k_0^\theta \otimes \chi)S_\theta\\
				A S_\theta - S_\theta A (I - \tilde{k}_0^\theta \otimes \tilde{k}_0^\theta) &=\psi \otimes (S_\theta^* k_0^\theta) + k_0^\theta \otimes (S_\theta^* \chi)\\
				A S_\theta - S_\theta A + (S_\theta A \tilde{k}_0^\theta) \otimes \tilde{k}_0^\theta &=-\theta(0) \psi \otimes \tilde{k}_0^\theta + k_0^\theta \otimes (S_\theta^* \chi)\\
				A S_\theta - S_\theta A &= -(S_\theta A \tilde{k}_0^\theta + \theta(0) \psi) \otimes \tilde{k}_0^\theta + k_0^\theta \otimes (S_\theta^* \chi)\\
				&=u \otimes \tilde{k}_0^\theta + k_0^\theta \otimes v,
			\end{split}
		\end{equation}
		where $u = -(S_\theta A \tilde{k}_0^\theta + \theta(0) \psi)$ and $v = S_\theta^* \chi$.

		$(\Leftarrow)$ Conversely, assume $A S_\theta - S_\theta A = u \otimes \tilde{k}_0^\theta + k_0^\theta \otimes v$. Right-multiplying this equation by $S_\theta^*$ and then using the relations \eqref{eq:defect_forward}, \eqref{eq:asymB} yields
		\begin{equation}
			\begin{split}
				A (S_\theta S_\theta^*) - S_\theta A S_\theta^* &= u \otimes (S_\theta \tilde{k}_0^\theta) + k_0^\theta \otimes (S_\theta v)\\
				A (I - k_0^\theta \otimes k_0^\theta) - S_\theta A S_\theta^* &= u \otimes (S_\theta \tilde{k}_0^\theta) + k_0^\theta \otimes (S_\theta v)\\
				A - S_\theta A S_\theta^* &= (A k_0^\theta) \otimes k_0^\theta + u \otimes (S_\theta \tilde{k}_0^\theta) + k_0^\theta \otimes (S_\theta v)\\
				&= (A k_0^\theta) \otimes k_0^\theta - \overline{\theta(0)} u \otimes k_0^\theta + k_0^\theta \otimes (S_\theta v) \\
				&= (A k_0^\theta - \overline{\theta(0)} u) \otimes k_0^\theta + k_0^\theta \otimes (S_\theta v)\\
				&=\psi \otimes k_0^\theta + k_0^\theta \otimes \chi,
			\end{split}
		\end{equation}
		where $\psi = A k_0^\theta - \overline{\theta(0)} u$ and $\chi = S_\theta v$.
	\end{proof}

	\begin{lemma}\label{lem:hankel-equiv}
		For any bounded linear operator $B\in\clb(\clk_\theta)$, the identity
		\[
		B-S_\theta^*BS_\theta^*
		=
		x\otimes k_0^\theta+\tilde{k}_0^\theta\otimes y
		\]
		for some $x,y\in\clk_\theta$ holds if and only if the identity
		\[
		BS_\theta-S_\theta^*B
		=
		u\otimes\tilde{k}_0^\theta+\tilde{k}_0^\theta\otimes v
		\]
		holds for some $u,v\in\clk_\theta$. More precisely, one may choose
		\[
		u=-(S_\theta^*B\tilde{k}_0^\theta+\theta(0)x)
		\quad\text{and}\quad
		v=S_\theta^*y
		\]
		in one direction, and
		\[
		x=Bk_0^\theta-\overline{\theta(0)}u
		\quad\text{and}\quad
		y=S_\theta v
		\]
		in the reverse direction.
	\end{lemma}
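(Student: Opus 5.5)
The proof follows the template of Lemma~\ref{lem:sarason-equiv}. In each direction we multiply the given identity on the right by $S_\theta$ or $S_\theta^*$. We then simplify with the defect identities \eqref{eq:defect_forward}, \eqref{eq:defect_backward} and the kernel relations \eqref{eq:asymA}, \eqref{eq:asymB}, together with the tensor rules \eqref{eq:tensrule}. The only change from the Toeplitz case is that the left factor $S_\theta$ in $S_\theta A S_\theta^*$ becomes $S_\theta^*$.

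$(\Rightarrow)$ Assume $B-S_\theta^*BS_\theta^*=x\otimes k_0^\theta+\tilde k_0^\theta\otimes y$ and multiply on the right by $S_\theta$. On the left we get $BS_\theta-S_\theta^*B(S_\theta^*S_\theta)$. By \eqref{eq:defect_backward},
\[
S_\theta^*B(S_\theta^*S_\theta)=S_\theta^*B-(S_\theta^*B\tilde k_0^\theta)\otimes\tilde k_0^\theta .
\]
On the right, \eqref{eq:tensrule} gives $x\otimes S_\theta^*k_0^\theta+\tilde k_0^\theta\otimes S_\theta^*y$. By \eqref{eq:asymA} and the conjugate-linearity rule $p\otimes(\alpha q)=\overline\alpha\,(p\otimes q)$, the first term equals $-\theta(0)\,x\otimes\tilde k_0^\theta$. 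Moving the rank-one term $(S_\theta^*B\tilde k_0^\theta)\otimes\tilde k_0^\theta$ to the right-hand side yields
\[
BS_\theta-S_\theta^*B=-(S_\theta^*B\tilde k_0^\theta+\theta(0)x)\otimes\tilde k_0^\theta+\tilde k_0^\theta\otimes S_\theta^*y .
\]
This is the claimed form with $u=-(S_\theta^*B\tilde k_0^\theta+\theta(0)x)$ and $v=S_\theta^*y$.

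$(\Leftarrow)$ Assume $BS_\theta-S_\theta^*B=u\otimes\tilde k_0^\theta+\tilde k_0^\theta\otimes v$ and multiply on the right by $S_\theta^*$. By \eqref{eq:defect_forward}, the left side becomes
\[
B(I-k_0^\theta\otimes k_0^\theta)-S_\theta^*BS_\theta^*=B-S_\theta^*BS_\theta^*-(Bk_0^\theta)\otimes k_0^\theta .
\]
The right side becomes $u\otimes S_\theta\tilde k_0^\theta+\tilde k_0^\theta\otimes S_\theta v$. By \eqref{eq:asymB}, the first term equals $-\overline{\theta(0)}\,u\otimes k_0^\theta$. Collecting terms gives
\[
B-S_\theta^*BS_\theta^*=(Bk_0^\theta-\overline{\theta(0)}u)\otimes k_0^\theta+\tilde k_0^\theta\otimes S_\theta v ,
\]
which is the stated choice $x=Bk_0^\theta-\overline{\theta(0)}u$, $y=S_\theta v$.

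Both directions are mechanical. The one point needing care is the bookkeeping of the conjugate-linear second slot of $\otimes$. This is where $\overline{\theta(0)}$ from \eqref{eq:asymA} turns into $\theta(0)$ in the forward direction, and where $\theta(0)$ from \eqref{eq:asymB} turns into $\overline{\theta(0)}$ in the reverse direction, exactly as in Lemma~\ref{lem:sarason-equiv}.
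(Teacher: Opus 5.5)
Your proposal is correct and follows the paper's proof essentially step for step. In both, the forward direction right-multiplies by $S_\theta$ and uses \eqref{eq:defect_backward} and \eqref{eq:asymA}, and the reverse direction right-multiplies by $S_\theta^*$ and uses \eqref{eq:defect_forward} and \eqref{eq:asymB}, with the conjugation coming from the second slot of $\otimes$ handled correctly.
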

	
	\begin{proof}
		$(\Rightarrow)$ Assume $B - S^*_\theta B S^*_\theta = x \otimes k_0^\theta + \tilde{k}_0^\theta \otimes y$. Right-multiplying the entire equation by the forward shift $S_\theta$  and  using the relations \eqref{eq:defect_backward}, \eqref{eq:asymA} yields
		\begin{equation}
			\begin{split}
				B S_\theta - S^*_\theta B (S^*_\theta S_\theta) &= (x \otimes k_0^\theta)S_\theta + (\tilde{k}_0^\theta \otimes y)S_\theta\\
				B S_\theta - S^*_\theta B (I - \tilde{k}_0^\theta \otimes \tilde{k}_0^\theta) &=x \otimes (S^*_\theta k_0^\theta) + \tilde{k}_0^\theta \otimes (S^*_\theta y)\\
				B S_\theta - S^*_\theta B + (S^*_\theta B \tilde{k}_0^\theta) \otimes \tilde{k}_0^\theta  &=-\theta(0) x \otimes \tilde{k}_0^\theta + \tilde{k}_0^\theta \otimes (S^*_\theta y)\\
				B S_\theta - S^*_\theta B &= -(S^*_\theta B \tilde{k}_0^\theta + \theta(0) x) \otimes \tilde{k}_0^\theta + \tilde{k}_0^\theta \otimes (S^*_\theta y)\\
				&=u \otimes \tilde{k}_0^\theta + \tilde{k}_0^\theta \otimes v,
			\end{split}
		\end{equation}
		where $u = -(S^*_\theta B \tilde{k}_0^\theta + \theta(0) x)$ and $v = S^*_\theta y$.
		
		$(\Leftarrow)$ Conversely, assume $B S_\theta - S^*_\theta B = u \otimes \tilde{k}_0^\theta + \tilde{k}_0^\theta \otimes v$. Right-multiplying by $S^*_\theta$ and using the relations \eqref{eq:defect_forward}, \eqref{eq:asymB}, we obtain 
		\begin{equation}
			\begin{split}
				B (S_\theta S^*_\theta) - S^*_\theta B S^*_\theta &= u \otimes (S_\theta \tilde{k}_0^\theta) + \tilde{k}_0^\theta \otimes (S_\theta v)\\
				B (I - k_0^\theta \otimes k_0^\theta) - S^*_\theta B S^*_\theta &= u \otimes (S_\theta \tilde{k}_0^\theta) + \tilde{k}_0^\theta \otimes (S_\theta v)\\
				B - S^*_\theta B S^*_\theta &= (B k_0^\theta) \otimes k_0^\theta + u \otimes (S_\theta \tilde{k}_0^\theta) + \tilde{k}_0^\theta \otimes (S_\theta v)\\
				B - S^*_\theta B S^*_\theta &= (B k_0^\theta) \otimes k_0^\theta - \overline{\theta(0)} u \otimes k_0^\theta + \tilde{k}_0^\theta \otimes (S_\theta v)\\
				&= (B k_0^\theta - \overline{\theta(0)} u) \otimes k_0^\theta + \tilde{k}_0^\theta \otimes (S_\theta v)\\
				&=x \otimes k_0^\theta + \tilde{k}_0^\theta \otimes y,
			\end{split}
		\end{equation}
		where $x = B k_0^\theta - \overline{\theta(0)} u$ and $y = S_\theta v$. This proves the reverse implication and completes the proof.
	\end{proof}

	We define the Toeplitz and Hankel first-order displacement operators $D_T, D_H : \clb(\clk_\theta) \to \clb(\clk_\theta)$ as follows:
	\begin{align*}
		D_T(X) &:= X - S_\theta X S_\theta^*, \\
		D_H(X) &:= X - S_\theta^* X S_\theta^*.
	\end{align*}

	\begin{lemma}\label{lem:inj}
		The maps $D_T$ and $D_H$ are injective on $\clb(\clk_\theta)$.
	\end{lemma}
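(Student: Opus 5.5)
Both maps are linear, so injectivity amounts to showing that the kernel is trivial. The plan is to iterate the fixed-point equation coming from the kernel condition and then use that $S_\theta$ is of class $C_{00}$, as recalled in Section~\ref{sec:prelim}: $S_\theta^n f\to 0$ and $(S_\theta^*)^n f\to 0$ for every $f\in\clk_\theta$.

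\textbf{Step 1 ($D_T$).} Suppose $D_T(X)=0$, i.e.\ $X=S_\theta X S_\theta^*$. Iterating gives
\[
X=S_\theta^n X (S_\theta^*)^n \qquad\text{for every } n\ge 1.
\]
For fixed $f\in\clk_\theta$, since $\|S_\theta\|\le 1$,
\[
\|Xf\|\le \|S_\theta^n\|\,\|X\|\,\|(S_\theta^*)^n f\|\le \|X\|\,\|(S_\theta^*)^nf\|.
\]
The right side tends to $0$ because $S_\theta^*$ is strongly stable. Hence $Xf=0$ for all $f$, so $X=0$.

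\textbf{Step 2 ($D_H$).} Suppose $D_H(X)=0$, i.e.\ $X=S_\theta^*XS_\theta^*$. Iterating gives $X=(S_\theta^*)^nX(S_\theta^*)^n$, and the same estimate yields
\[
\|Xf\|\le\|X\|\,\|(S_\theta^*)^nf\|\to 0 .
\]
Again $X=0$.

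\textbf{Main point.} The only nontrivial input is that in each iterate the right-hand factor is a power of $S_\theta^*$. One must place the strong convergence on the vector side, not on the left operator power, because strong convergence of $S_\theta^n$ alone does not control a product in operator norm. Since the stable factor acts directly on $f$ and the left factor is a contraction, no uniform convergence is needed. The $C_{00}$ property itself is assumed from \cite{NF}, as in the proof of Theorem~\ref{thm:triply-spatial}.
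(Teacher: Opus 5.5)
Your proof is correct and follows the paper's argument: iterate the fixed-point relation to $X=S_\theta^nX(S_\theta^*)^n$ (resp.\ $X=(S_\theta^*)^nX(S_\theta^*)^n$), bound the left factor by a contraction, and let the strongly stable factor $(S_\theta^*)^n$ act on the vector. The only cosmetic difference is that the paper estimates $|\langle Xf,g\rangle|$ rather than $\|Xf\|$, which changes nothing of substance.
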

	
	\begin{proof}
		We first prove the injectivity of $D_T$. Suppose that $D_T(X)=0$. Then
		\[
		X=S_\theta X S_\theta^*.
		\]
		Iterating this identity gives
		\[
		X=S_\theta^n X(S_\theta^*)^n,\qquad n\geq 1.
		\]
		Hence, for $f,g\in \clk_\theta$,
		\[
		\begin{aligned}
			\langle Xf,g\rangle
			&= \left\langle S_\theta^n X(S_\theta^*)^n f,g\right\rangle  \\
			&= \left\langle X(S_\theta^*)^n f,(S_\theta^*)^n g\right\rangle .
		\end{aligned}
		\]
		Therefore,
		\[
		|\langle Xf,g\rangle|
		\leq \|X\|\,\|(S_\theta^*)^n f\|\,\|(S_\theta^*)^n g\|.
		\]
		Since $S_\theta^*$ is strongly stable on $\clk_\theta$, we have
		\[
		(S_\theta^*)^n f\to 0 \qquad \text{and} \qquad (S_\theta^*)^n g\to 0.
		\]
		Thus $\langle Xf,g\rangle=0$ for all $f,g\in\clk_\theta$, and hence $X=0$.
		This proves that $D_T$ is injective.
		
		Next suppose that $D_H(X)=0$. Then
		\[
		X=S_\theta^* X S_\theta^*.
		\]
		By iteration,
		\[
		X=(S_\theta^*)^n X(S_\theta^*)^n,\qquad n\geq 1.
		\]
		Thus, for $f,g\in\clk_\theta$,
		\[
		\begin{aligned}
			\langle Xf,g\rangle
			&=
			\left\langle (S_\theta^*)^n X(S_\theta^*)^n f,g\right\rangle  \\
			&=
			\left\langle X(S_\theta^*)^n f,S_\theta^n g\right\rangle .
		\end{aligned}
		\]
		Consequently,
		\[
		|\langle Xf,g\rangle|
		\leq
		\|X\|\,\|(S_\theta^*)^n f\|\,\|S_\theta^n g\|.
		\]
		Since $S_\theta$ is a contraction, $\|S_\theta^n g\|\leq \|g\|$, while
		\[
		(S_\theta^*)^n f\to 0.
		\]
		It follows that $\langle Xf,g\rangle=0$ for all $f,g\in\clk_\theta$, and therefore, $X=0$. Hence $D_H$ is injective.
	\end{proof}

	\begin{lemma} \label{lem:commutator-displacements}
		For every $X\in\clb(\clk_\theta)$, we have
		\[
		[D_H,D_T](X) = k_0^\theta\otimes \left(S_\theta^2X^*k_0^\theta\right) - \widetilde k_0^\theta\otimes \left(S_\theta^2X^*\widetilde k_0^\theta\right),
		\]
		which has rank at most two.
	\end{lemma}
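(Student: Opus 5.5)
The plan is a direct expansion: compute both compositions $D_HD_T(X)$ and $D_TD_H(X)$ explicitly, cancel the common terms, and reduce what remains with the defect identities \eqref{eq:defect_forward} and \eqref{eq:defect_backward}. Throughout I write $S=S_\theta$.

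\emph{Step 1: expand the two compositions.} By linearity of $D_H$ and $D_T$,
\[
D_HD_T(X)=D_H(X)-D_H(SXS^*)=X-S^*XS^*-SXS^*+S^*SXS^*S^*,
\]
\[
D_TD_H(X)=D_T(X)-D_T(S^*XS^*)=X-SXS^*-S^*XS^*+SS^*XS^*S^*.
\]
The first three terms of each expansion are the same. Subtracting therefore leaves
\[
[D_H,D_T](X)=\bigl(S^*S-SS^*\bigr)XS^{*2}.
\]

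\emph{Step 2: substitute the defect identities.} From \eqref{eq:defect_backward} and \eqref{eq:defect_forward},
\[
S^*S-SS^*=(I-\widetilde k_0^\theta\otimes\widetilde k_0^\theta)-(I-k_0^\theta\otimes k_0^\theta)=k_0^\theta\otimes k_0^\theta-\widetilde k_0^\theta\otimes\widetilde k_0^\theta .
\]

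\emph{Step 3: move the right factor into the tensors.} By the rule \eqref{eq:tensrule}, for any vector $p$ we have $(p\otimes p)XS^{*2}=p\otimes\bigl((XS^{*2})^*p\bigr)$. Since $(XS^{*2})^*=S^2X^*$, this gives
\[
[D_H,D_T](X)=k_0^\theta\otimes\bigl(S_\theta^2X^*k_0^\theta\bigr)-\widetilde k_0^\theta\otimes\bigl(S_\theta^2X^*\widetilde k_0^\theta\bigr),
\]
which is the claimed formula.

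\emph{Step 4: the rank bound.} The right-hand side is a difference of two rank-one operators, so it has rank at most two.

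The argument has no real obstacle. The only care needed is in Step 1: keeping track of the operator order when composing the two displacement maps, so that the mixed terms $S^*SXS^*S^*$ and $SS^*XS^*S^*$ are correctly identified. Once they are, the defect identities finish the computation.
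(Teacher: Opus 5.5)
Your proposal is correct and follows the paper's proof essentially step for step. You expand both compositions to get $(S_\theta^*S_\theta-S_\theta S_\theta^*)X(S_\theta^*)^2$, rewrite the bracket as $k_0^\theta\otimes k_0^\theta-\widetilde k_0^\theta\otimes\widetilde k_0^\theta$ via the defect identities, and then apply the tensor rule \eqref{eq:tensrule} with $(X(S_\theta^*)^2)^*=S_\theta^2X^*$.
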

	
	\begin{proof}
		We explicitly expand the operator compositions on an arbitrary $X$:
		\begin{align*}
			(D_H D_T)(X) &= D_H(X - S_\theta X S_\theta^*) \\
			&= (X - S_\theta X S_\theta^*) - S_\theta^*(X - S_\theta X S_\theta^*)S_\theta^* \\
			&= X - S_\theta X S_\theta^* - S_\theta^* X S_\theta^* + S_\theta^* S_\theta X (S_\theta^*)^2,
		\end{align*}
		and
		\begin{align*}
			(D_T  D_H)(X) &= D_T(X - S_\theta^* X S_\theta^*) \\
			&= (X - S_\theta^* X S_\theta^*) - S_\theta(X - S_\theta^* X S_\theta^*)S_\theta^* \\
			&= X - S_\theta^* X S_\theta^* - S_\theta X S_\theta^* + S_\theta S_\theta^* X (S_\theta^*)^2.
		\end{align*}
		Therefore,
		\begin{align*}
			[D_H, D_T](X) &:= (D_H  D_T)(X) - (D_T D_H)(X) \\
			&= (S_\theta^* S_\theta - S_\theta S_\theta^*) X (S_\theta^*)^2.
		\end{align*}
		Substituting $S_\theta^* S_\theta = I - \tilde{k}_0^\theta \otimes \tilde{k}_0^\theta$ and $S_\theta S_\theta^* = I - k_0^\theta \otimes k_0^\theta$, we get $S_\theta^* S_\theta - S_\theta S_\theta^*=k_0^\theta \otimes k_0^\theta - \tilde{k}_0^\theta \otimes \tilde{k}_0^\theta$. Thus
		\begin{align*}
			[D_H, D_T](X) &= (k_0^\theta \otimes k_0^\theta - \tilde{k}_0^\theta \otimes \tilde{k}_0^\theta) \big(X (S_\theta^*)^2\big) \\
			&= k_0^\theta \otimes \big((S_\theta^2) X^* k_0^\theta\big) - \tilde{k}_0^\theta \otimes \big((S_\theta^2) X^* \tilde{k}_0^\theta\big).
		\end{align*}
		This completes the proof.
	\end{proof}

	\begin{proposition}\label{prop:mixed-second-order}
		Let
		\[
		T=A_\vp+B_\psi,
		\]
		where $A_\vp$ is a truncated Toeplitz operator and $B_\psi$ is a truncated Hankel operator on $\clk_\theta$, with symbols $\vp,\psi\in L^\infty(\T)$. Then $D_HD_T(T)$ is a finite-rank operator of
		rank at most $4$.
		
		More precisely, suppose
		\[
		A_\vp-S_\theta A_\vp S_\theta^*
		=
		u_1\otimes k_0^\theta+k_0^\theta\otimes v_1
		\]
		and
		\[
		B_\psi-S_\theta^*B_\psi S_\theta^*
		=
		u_2\otimes k_0^\theta+\widetilde k_0^\theta\otimes v_2 .
		\]
		Then
		\begin{align*}
			D_HD_T(T)
			&= k_0^\theta \otimes
			\big[v_1+ \overline{\theta(0)} S_\theta v_2+S_\theta^2B_\psi^*k_0^\theta\big] \\
			&\quad +\widetilde k_0^\theta\otimes
			\big[{\theta(0)}S_\theta v_1+v_2
			-S_\theta^2B_\psi^*\widetilde k_0^\theta\big] \\
			&\quad +(u_1+u_2)\otimes k_0^\theta
			-\big(S_\theta^*u_1+S_\theta u_2\big)\otimes S_\theta k_0^\theta .
		\end{align*}
	\end{proposition}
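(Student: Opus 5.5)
The plan is to use linearity of $D_HD_T$ and split
\[
D_HD_T(T)=D_HD_T(A_\vp)+D_HD_T(B_\psi),
\]
treating the two summands differently. First, the displacement data exist: $A_\vp\in\clt_\theta$ and $B_\psi\in\clh_\theta$, so Sarason's theorem \eqref{eq:sarason} and the Gu--Ma theorem \eqref{eq:guma} supply vectors $u_1,v_1,u_2,v_2\in\clk_\theta$ with the two stated identities.

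For the Toeplitz part, apply $D_H$ directly to $D_T(A_\vp)=u_1\otimes k_0^\theta+k_0^\theta\otimes v_1$. By \eqref{eq:tensrule}, $S_\theta^*(p\otimes q)S_\theta^*=(S_\theta^*p)\otimes(S_\theta q)$. This gives
\[
D_HD_T(A_\vp)=u_1\otimes k_0^\theta+k_0^\theta\otimes v_1-(S_\theta^*u_1)\otimes S_\theta k_0^\theta-(S_\theta^*k_0^\theta)\otimes S_\theta v_1 .
\]
By \eqref{eq:asymA}, $S_\theta^*k_0^\theta=-\overline{\theta(0)}\widetilde k_0^\theta$. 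Moving the scalar across the tensor with \eqref{eq:tensrule} turns the last term into $+\widetilde k_0^\theta\otimes\theta(0)S_\theta v_1$.

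For the Hankel part, $D_H(B_\psi)$ is the known quantity, so I write
\[
D_HD_T(B_\psi)=D_TD_H(B_\psi)+[D_H,D_T](B_\psi).
\]
The commutator term is given by Lemma~\ref{lem:commutator-displacements} with $X=B_\psi$. It equals $k_0^\theta\otimes S_\theta^2B_\psi^*k_0^\theta-\widetilde k_0^\theta\otimes S_\theta^2B_\psi^*\widetilde k_0^\theta$, which accounts for exactly the $B_\psi^*$ terms in the formula. For the other term, apply $D_T$ to $u_2\otimes k_0^\theta+\widetilde k_0^\theta\otimes v_2$ using $S_\theta(p\otimes q)S_\theta^*=(S_\theta p)\otimes(S_\theta q)$:
\[
D_TD_H(B_\psi)=u_2\otimes k_0^\theta-(S_\theta u_2)\otimes S_\theta k_0^\theta+\widetilde k_0^\theta\otimes v_2-(S_\theta\widetilde k_0^\theta)\otimes S_\theta v_2 .
\]
By \eqref{eq:asymB}, $S_\theta\widetilde k_0^\theta=-\theta(0)k_0^\theta$, so the last term becomes $k_0^\theta\otimes\overline{\theta(0)}S_\theta v_2$.

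Adding everything and grouping by left factors gives the four groups in the statement, with left factors $k_0^\theta$, $\widetilde k_0^\theta$, $u_1+u_2$ and $S_\theta^*u_1+S_\theta u_2$. The rank bound $\le 4$ follows because the range lies in the span of these four vectors. The main obstacle is only bookkeeping: the conjugate-linearity in the second slot when scalars such as $\overline{\theta(0)}$ move across $\otimes$, and choosing which side of the commutator to expand for each summand. Expanding $D_H$ after $D_T$ directly for $A_\vp$, but routing through $D_TD_H$ plus the commutator for $B_\psi$, is what makes the displayed form appear.
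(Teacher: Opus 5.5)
Your proposal is correct and follows the paper's own proof. You split $D_HD_T(T)$ by linearity and apply $D_H$ directly to the Sarason defect $u_1\otimes k_0^\theta+k_0^\theta\otimes v_1$. You route the Hankel part through $D_HD_T(B_\psi)=D_TD_H(B_\psi)+[D_H,D_T](B_\psi)$ using Lemma~\ref{lem:commutator-displacements}, and you simplify with \eqref{eq:asymA} and \eqref{eq:asymB}.

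Your scalar bookkeeping checks out. The only cosmetic slip is this: the last two groups, $(u_1+u_2)\otimes k_0^\theta$ and $-(S_\theta^*u_1+S_\theta u_2)\otimes S_\theta k_0^\theta$, come from grouping by the common right factors $k_0^\theta$ and $S_\theta k_0^\theta$, not by left factors.
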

	
	\begin{proof}
		By linearity,
		\[
		D_HD_T(T)=D_HD_T(A_\vp)+D_HD_T(B_\psi).
		\]
		For the truncated Toeplitz part, Sarason's displacement identity gives
		\[
		D_T(A_\vp)=u_1\otimes k_0^\theta+k_0^\theta\otimes v_1.
		\]
		For the truncated Hankel part, we use
		\[
		D_HD_T(B_\psi)=D_TD_H(B_\psi)+[D_H,D_T](B_\psi).
		\]
		Since
		\[
		D_H(B_\psi)=u_2\otimes k_0^\theta+\widetilde k_0^\theta\otimes v_2,
		\]
		we obtain
		\[
		\begin{aligned}
			D_HD_T(T)
			&=
			D_H\bigl(u_1\otimes k_0^\theta+k_0^\theta\otimes v_1\bigr)  \\
			&\quad
			+
			D_T\bigl(u_2\otimes k_0^\theta+\widetilde k_0^\theta\otimes v_2\bigr)
			+
			[D_H,D_T](B_\psi).
		\end{aligned}
		\]
		Using
		\[
		D_T(x\otimes y)=x\otimes y-S_\theta x\otimes S_\theta y
		\]
		and
		\[
		D_H(x\otimes y)=x\otimes y-S_\theta^*x\otimes S_\theta y,
		\]
		together with Lemma~\ref{lem:commutator-displacements}, we get
		\begin{align*}
			D_HD_T(T)
			&= u_1\otimes k_0^\theta
			-(S_\theta^*u_1)\otimes(S_\theta k_0^\theta)
			+k_0^\theta\otimes v_1
			-(S_\theta^*k_0^\theta)\otimes(S_\theta v_1)\\
			&\quad
			+u_2\otimes k_0^\theta
			-(S_\theta u_2)\otimes(S_\theta k_0^\theta)
			+\widetilde k_0^\theta\otimes v_2
			-(S_\theta\widetilde k_0^\theta)\otimes(S_\theta v_2)\\
			&\quad
			+k_0^\theta\otimes \big(S_\theta^2B_\psi^*k_0^\theta\big)
			-\widetilde k_0^\theta\otimes
			\big(S_\theta^2B_\psi^*\widetilde k_0^\theta\big).
		\end{align*}
		Now use the identities
		\[
		S_\theta^*k_0^\theta
		=
		-\overline{\theta(0)}\,\widetilde k_0^\theta,
		\qquad
		S_\theta\widetilde k_0^\theta
		=
		-\theta(0)k_0^\theta .
		\]
		
		We obtain
		\begin{align*}
			D_HD_T(T)
			&= k_0^\theta \otimes
			\big[v_1+\overline{\theta(0)}S_\theta v_2
			+S_\theta^2B_\psi^*k_0^\theta\big] \\
			&\quad +\widetilde k_0^\theta\otimes
			\big[\theta(0)S_\theta v_1+v_2
			-S_\theta^2B_\psi^*\widetilde k_0^\theta\big] \\
			&\quad +(u_1+u_2)\otimes k_0^\theta
			-\big(S_\theta^*u_1+S_\theta u_2\big)\otimes S_\theta k_0^\theta .
		\end{align*}
		Thus $D_HD_T(T)$ is a sum of at most four rank-one operators. Hence
		\[
		\operatorname{rank} D_HD_T(T)\le 4.
		\]
		This completes the proof.
	\end{proof}

	Now, we define the following spaces
	\[
	\clr_T
	=
	\{u\otimes k_0^\theta+k_0^\theta\otimes v:
	u,v\in \clk_\theta\},
	\]
	and 
	\[
	\clr_H
	=
	\{x\otimes k_0^\theta+\widetilde k_0^\theta\otimes y:
	x,y\in \clk_\theta\},
	\]
	then, by \eqref{eq:sarason}, $A$ is a TTO if and only if $D_T(A)\in\mathcal{R}_T$; and, by \eqref{eq:guma}, $B$ is THO if and only if $D_H(B)\in\mathcal{R}_H$.

	\begin{theorem}\label{thm:TTO-THO-characterization}
		Let $T\in\clb(\clk_\theta)$. Then $T$ can be written as
		\[
		T=A+B,
		\]
		where $A$ is a truncated Toeplitz operator and $B$ is a truncated Hankel operator on $\clk_\theta$, if and only if there exist
		\[
		R\in\clr_T
		\qquad\text{and}\qquad
		B\in\clb(\clk_\theta)
		\]
		such that
		\[
		D_H(B)\in\clr_H,
		\quad 
		D_HD_T(T)=D_H(R)+D_HD_T(B).
		\]
	\end{theorem}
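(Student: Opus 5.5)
The plan is to prove both directions by reducing everything, through the injectivity of the displacement maps (Lemma~\ref{lem:inj}), to the two first-order characterizations: Sarason's identity \eqref{eq:sarason}, $D_T(A)\in\clr_T$, and the Gu--Ma identity \eqref{eq:guma}, $D_H(B)\in\clr_H$. Throughout we use that $D_T$ and $D_H$ are linear maps on $\clb(\clk_\theta)$.

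\emph{Forward direction.} Suppose $T=A+B$ with $A\in\clt_\theta$ and $B\in\clh_\theta$.
\begin{itemize}
\item Set $R:=D_T(A)$. By \eqref{eq:sarason}, $R\in\clr_T$.
\item Keep the same $B$. By \eqref{eq:guma}, $D_H(B)\in\clr_H$.
\item By linearity, $D_T(T)=D_T(A)+D_T(B)=R+D_T(B)$.
\item Applying $D_H$ gives $D_HD_T(T)=D_H(R)+D_HD_T(B)$, which is the required identity.
\end{itemize}

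\emph{Reverse direction.} Suppose $R\in\clr_T$ and $B\in\clb(\clk_\theta)$ satisfy $D_H(B)\in\clr_H$ and $D_HD_T(T)=D_H(R)+D_HD_T(B)$.
\begin{itemize}
\item Since $D_H(B)\in\clr_H$, the Gu--Ma theorem \eqref{eq:guma} gives $B\in\clh_\theta$.
\item Rewrite the hypothesis as $D_H\bigl(D_T(T)-R-D_T(B)\bigr)=0$.
\item Injectivity of $D_H$ (Lemma~\ref{lem:inj}) yields $D_T(T-B)=R$.
\item Put $A:=T-B\in\clb(\clk_\theta)$. Then $D_T(A)=R\in\clr_T$, so Sarason's theorem \eqref{eq:sarason} gives $A\in\clt_\theta$.
\item Hence $T=A+B$ is the desired decomposition.
\end{itemize}

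\emph{Main point of care.} The only substantive step is the cancellation of $D_H$, and Lemma~\ref{lem:inj} supplies it. That lemma is stated for arbitrary bounded operators, so it applies to $D_T(T)-R-D_T(B)$ as well.

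We must also confirm that the characterizations \eqref{eq:sarason} and \eqref{eq:guma} are genuine equivalences for arbitrary $A,B\in\clb(\clk_\theta)$, and not just for operators already given by bounded symbols. As quoted in Section~\ref{sec:prelim}, they are stated as "if and only if" for bounded operators on $\clk_\theta$, which is exactly what the reverse direction uses.

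It is worth noting that we only need $D_H$ to be injective, not $D_T$. Also, $A$ is defined directly as $T-B$, so no separate boundedness argument for $A$ is required.
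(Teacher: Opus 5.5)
Your proposal is correct and matches the paper's proof step for step. In the forward direction you set $R=D_T(A)$ and use linearity. In the reverse direction you use Gu--Ma to get $B\in\clh_\theta$, cancel $D_H$ via its injectivity (Lemma~\ref{lem:inj}) to obtain $D_T(T-B)=R\in\clr_T$, and conclude with Sarason's theorem that $T-B\in\clt_\theta$, exactly as the paper does.
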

	
	\begin{proof}
		Suppose first that
		\[
		T=A+B,
		\]
		where $A$ is a TTO  and $B$ is a THO. By Sarason's characterization \eqref{eq:sarason},
		\[
		D_T(A)\in\clr_T.
		\]
		Put
		\[
		R:=D_T(A)\in\clr_T.
		\]
		Since $B$ is a THO, the Gu--Ma characterization \eqref{eq:guma} gives
		\[
		D_H(B)\in\clr_H.
		\]
		By linearity,
		\[
		D_HD_T(T)
		=
		D_HD_T(A)+D_HD_T(B)
		=
		D_H(R)+D_HD_T(B).
		\]
		
		Conversely, suppose there exist $R\in\clr_T$ and $B\in\clb(\clk_\theta)$ such that
		\[
		D_H(B)\in\clr_H
		\]
		and
		\[
		D_HD_T(T)=D_H(R)+D_HD_T(B).
		\]
		Then, by \eqref{eq:guma}, $B$ is a THO. Moreover,
		\[
		D_H\bigl(D_T(T)-R-D_T(B)\bigr)=0.
		\]
		By Lemma~\ref{lem:inj}, $D_H$ is injective in $\clb(\clk_\theta)$. Therefore
		\[
		D_T(T)-R-D_T(B)=0.
		\]
		Thus
		\[
		D_T(T-B)=R\in\clr_T.
		\]
		By Sarason's characterization \eqref{eq:sarason}, $T-B$ is a TTO. Hence
		\[
		T=(T-B)+B
		\]
		is a sum of a TTO and a THO.
	\end{proof}

	The decomposition in Theorem \ref{thm:TTO-THO-characterization} is in general not unique. By \eqref{eq:sarason} and \eqref{eq:guma}, we have
	\[
	\cli_\theta = \{C \in \clb(\clk_\theta) : D_T(C) \in \clr_T \text{ and } D_H(C) \in \clr_H\}.
	\]
	Theorem \ref{thm:TTO-THO-characterization} says exactly that $T \in \cls_\theta$ if and only if the displayed displacement equation is solvable for some $R \in \clr_T$ and some $B \in \clb(\clk_\theta)$ with $D_H(B) \in \clr_H$ and $D_HD_T(T)=D_H(R)+D_HD_T(B)$. 
	
	\begin{lemma}\label{lem:uniqueness}
		Let
		\[
		T=A+B=A'+B'
		\]
		be two decompositions with $A,A'\in\clt_\theta$ and $B,B'\in\clh_\theta$. Then
		\[
		A-A'=B'-B\in\cli_\theta,
		\]
		where
		\[
		\cli_\theta=\clt_\theta\cap\clh_\theta.
		\]
		Conversely, if $C\in\cli_\theta$ and $T=A+B$ is such a decomposition, then
		\[
		T=(A+C)+(B-C)
		\]
		is again a decomposition of $T$ as a sum of a truncated Toeplitz operator and a truncated Hankel operator. Hence the decomposition is unique modulo $\cli_\theta$.
	\end{lemma}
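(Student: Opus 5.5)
The lemma is essentially linear algebra: $\clt_\theta$ and $\clh_\theta$ are linear subspaces of $\clb(\clk_\theta)$, and the argument uses only that. The first step is to confirm that both classes are closed under addition and scalar multiplication. I will use the displacement characterizations \eqref{eq:sarason} and \eqref{eq:guma} rather than symbols, to avoid worrying about whether bounded operators have bounded symbols. $D_T$ and $D_H$ are linear maps, and the sets $\clr_T$ and $\clr_H$ are linear subspaces: for example,
\[
(u\otimes k_0^\theta+k_0^\theta\otimes v)+\alpha(u'\otimes k_0^\theta+k_0^\theta\otimes v')=(u+\alpha u')\otimes k_0^\theta+k_0^\theta\otimes(v+\overline{\alpha}v'),
\]
which uses the conjugate-linearity rule in \eqref{eq:tensrule}. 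Consequently $\clt_\theta=D_T^{-1}(\clr_T)$ and $\clh_\theta=D_H^{-1}(\clr_H)$ are subspaces, and so is $\cli_\theta$.

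For the first assertion, I would rearrange $A+B=A'+B'$ as $A-A'=B'-B$. The left-hand side lies in $\clt_\theta$ and the right-hand side lies in $\clh_\theta$, because both are subspaces. The common operator therefore lies in $\clt_\theta\cap\clh_\theta=\cli_\theta$.

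For the converse, take $C\in\cli_\theta$. Then $A+C\in\clt_\theta$ because $C\in\clt_\theta$, and $B-C\in\clh_\theta$ because $C\in\clh_\theta$. Their sum is $A+B=T$, so this is another admissible decomposition. Together the two directions show that the set of Toeplitz components is exactly the coset $A+\cli_\theta$ (and likewise $B-\cli_\theta$ for the Hankel components), which is what "unique modulo $\cli_\theta$" means.

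The only point needing care is the subspace property itself. I expect no real obstacle there, since the displacement criteria make linearity immediate. One could also argue through symbols, using $A_\vp+\alpha A_{\vp'}=A_{\vp+\alpha\vp'}$ and the analogous identity for $B_\psi$, but this requires every element of each class to carry a bounded symbol, which is not assumed. That is why I prefer the displacement route.
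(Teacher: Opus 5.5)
Your proof is correct and follows the paper's argument: set $C=A-A'=B'-B$, use that $\clt_\theta$ and $\clh_\theta$ are linear subspaces to place $C$ in $\cli_\theta$, and for the converse note that $A+C\in\clt_\theta$ and $B-C\in\clh_\theta$. The only difference is that you explicitly justify the subspace property through $\clt_\theta=D_T^{-1}(\clr_T)$ and $\clh_\theta=D_H^{-1}(\clr_H)$, whereas the paper simply asserts it.
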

	
	\begin{proof}
		Suppose
		\[
		T=A+B=A'+B',
		\]
		where $A,A'\in\clt_\theta$ and $B,B'\in\clh_\theta$. Set
		\[
		C:=A-A'.
		\]
		Then
		\[
		C=B'-B.
		\]
		Since $\clt_\theta$ and $\clh_\theta$ are linear subspaces, $C$ is both a TTO and a THO. Hence
		\[
		C\in\clt_\theta\cap\clh_\theta=\cli_\theta.
		\]
		
		Conversely, let $C\in\cli_\theta$ and suppose $T=A+B$, where $A\in\clt_\theta$ and $B\in\clh_\theta$. Since $C\in\cli_\theta$, we have $C\in\clt_\theta$ and $C\in\clh_\theta$. Therefore,
		\[
		A+C\in\clt_\theta
		\qquad\text{and}\qquad
		B-C\in\clh_\theta.
		\]
		Moreover,
		\[
		(A+C)+(B-C)=A+B=T.
		\]
		Thus every decomposition is determined only up to addition and subtraction of an element of $\cli_\theta$.
	\end{proof}
	
	\begin{remark}
		At the symbol level, there is an additional ambiguity coming from the kernels of the symbol maps. Suppose
		\[
		A_\vp+B_\psi=A_{\vp'}+B_{\psi'}.
		\]
		By Lemma~\ref{lem:uniqueness}, there exists $C\in\cli_\theta$ such
		that
		\[
		C=A_\vp-A_{\vp'}=B_{\psi'}-B_\psi.
		\]
		Choose symbols $\vp_C\in L^\infty(\T)$ and
		$\psi_C\in L^\infty(\T)\cap\overline{zH^2}$ such that
		\[
		A_{\vp_C}=C
		\qquad\text{and}\qquad
		B_{\psi_C}=C.
		\]
		Then $A_{\varphi}=A_{\varphi'+\varphi_{C}}$ and
		$B_{\psi'}=B_{\psi+\psi_{C}}$, and hence
		\[
		\varphi-(\varphi'+\varphi_{C})\in\theta H^{2}+\overline{\theta H^{2}}
		\]
		by the kernel description of the TTO symbol map, while
		\[
		\psi'-(\psi+\psi_{C})\ \text{lies in the kernel of the THO symbol map }\ \psi\mapsto B_{\psi}.
		\]
		This kernel is nontrivial, so a truncated Hankel operator does not determine its symbol uniquely. Thus the non-uniqueness of symbols comes from both the operator-level ambiguity $\cli_\theta$ and the kernels of the two symbol maps.
	\end{remark}
	
	In particular, the ambiguity space $\cli_\theta$ measures the failure of uniqueness of the decomposition. We now identify this space explicitly in the finite-dimensional case $\theta(z)=z^n$.
	
	\begin{remark}\label{rem:Izn}
		Let $\theta(z)=z^n$, $n\ge 2$. With respect to the basis
		\[
		e_j=z^j,\qquad 0\le j\le n-1,
		\]
		of $\clk_\theta$, TTOs are precisely the $n\times n$ Toeplitz matrices, while THOs are precisely the $n\times n$ Hankel matrices. Hence
		\[
		\cli_{z^n}
		=
		\{M\in M_n(\C): M \text{ is both Toeplitz and Hankel}\}.
		\]
		
		Let $M=(M_{ij})_{i,j=0}^{n-1}\in\cli_{z^n}$. Since $M$ is Toeplitz and Hankel, there exist scalar sequences $(\alpha_r)$ and $(\beta_s)$ such that
		\[
		M_{ij}=\alpha_{i-j}=\beta_{i+j},
		\qquad 0\le i,j\le n-1.
		\]
		Thus the entry $M_{ij}$ depends simultaneously on $i-j$ and on $i+j$. Since $i-j$ and $i+j$ have the same parity, all entries with $i+j$ even have one common value, and all entries with $i+j$ odd have another common value. Therefore $\cli_{z^n}$ is spanned by the two checkerboard matrices
		\[
		(E_{\mathrm{even}})_{ij}
		=
		\begin{cases}
			1, & i+j \text{ even},\\
			0, & i+j \text{ odd},
		\end{cases}
		\qquad
		(E_{\mathrm{odd}})_{ij}
		=
		\begin{cases}
			0, & i+j \text{ even},\\
			1, & i+j \text{ odd}.
		\end{cases}
		\]
		Both matrices are Toeplitz and Hankel, and they are linearly independent. Hence
		\[
		\dim \cli_{z^n}=2,
		\qquad n\ge 2.
		\]
	\end{remark}

	\begin{corollary}\label{cor:zn}
		Let $\theta(z)=z^n$ for $n\ge 2$. Then
		\[
		\clk_\theta=\operatorname{span}\{1,z,\ldots,z^{n-1}\}.
		\]
		With respect to the basis $e_j=z^j$, $0\le j\le n-1$, an operator $T=(T_{ij})_{i,j=0}^{n-1}$ belongs to $\cls_\theta$ if and only if
		\[
		T_{i-1,j}+T_{i+1,j}=T_{i,j-1}+T_{i,j+1},
		\qquad 1\le i,j\le n-2.
		\]
		Equivalently, $T$ is a Toeplitz-plus-Hankel matrix in the classical sense of Bevilacqua--Bonanni--Bozzo \cite{BBB}.
	\end{corollary}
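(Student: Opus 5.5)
The plan is to first identify $\clt_\theta$ and $\clh_\theta$ concretely for $\theta=z^n$, and then prove the recurrence characterization by a dimension count. Here $\clk_{z^n}=\operatorname{span}\{1,\dots,z^{n-1}\}$ and $P_{\clk_\theta}$ keeps the Fourier coefficients $0,\dots,n-1$. For $A_\vp$ a direct computation gives $\ip{A_\vp e_j}{e_i}=\widehat\vp(i-j)$, so TTOs are exactly the $n\times n$ Toeplitz matrices; every such matrix arises from a trigonometric polynomial symbol. For $B_\psi$ we have $JP_-(\vp z^j)$ with $(Jf)(z)=\bar z f(\bar z)$: the coefficient of $\bar z^{m}$, $m\ge1$, is sent to $z^{m-1}$. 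Hence $\ip{B_\psi e_j}{e_i}=\widehat\psi(-(i+j+1))$, which is a Hankel matrix, and again every Hankel matrix occurs. (This is the content already used in Remark~\ref{rem:Izn}.) Thus $\cls_{z^n}$ is exactly the space of Toeplitz-plus-Hankel matrices, which gives the ``equivalently'' clause.

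Necessity of the recurrence is then immediate. If $T_{ij}=\alpha_{i-j}$, then $T_{i-1,j}+T_{i+1,j}=\alpha_{i-j-1}+\alpha_{i-j+1}=T_{i,j+1}+T_{i,j-1}$. Likewise, if $T_{ij}=\beta_{i+j}$, both sides equal $\beta_{i+j-1}+\beta_{i+j+1}$. Since the recurrence is linear, it holds on the sum.

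For sufficiency I would compare dimensions. By Lemma~\ref{lem:uniqueness} and Remark~\ref{rem:Izn},
\[
\dim\cls_{z^n}=\dim\clt+\dim\clh-\dim\cli=(2n-1)+(2n-1)-2=4n-4.
\]
Let $\mathcal{V}$ denote the solution space of the recurrence. I claim $\dim\mathcal V\le 4n-4$, with injectivity of the following restriction map. A solution is determined by its boundary entries, namely rows $0,n-1$ and columns $0,n-1$. Indeed, the recurrence can be rewritten as
\[
T_{i+1,j}=T_{i,j-1}+T_{i,j+1}-T_{i-1,j},
\]
so it determines row $i+1$ at the interior columns $1\le j\le n-2$ from rows $i,i-1$. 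Starting from row $0$ and the boundary columns, and induction on $i$ would use row $1$, which is not boundary. So I would instead induct on \emph{diagonals} or use a symmetric sweep. The cleanest approach is to show directly that a solution vanishing on the $4n-4$ boundary entries is zero. Define $U_{ij}=T_{i,j}$. The recurrence is a discrete wave equation $\Delta_iT=\Delta_jT$, so in the variables $p=i+j$, $q=i-j$ it reads $T_{p+1,q+1}+T_{p-1,q-1}=T_{p-1,q+1}+T_{p+1,q-1}$ in rotated coordinates. Propagating along diagonals from the boundary (characteristics hit the boundary), one shows the values are forced. Concretely, induct on $d=\min(i,j,n-1-i,n-1-j)$ together with the diagonal ordering. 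This propagation/injectivity step is the main obstacle, and I would handle it by checking that the map $T\mapsto$ (boundary data) is injective on $\mathcal V$, which gives $\dim\mathcal V\le 4n-4$. Combined with $\cls_{z^n}\subseteq\mathcal V$, this yields $\cls_{z^n}=\mathcal V$.

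If the injectivity bookkeeping becomes awkward, the fallback is more constructive. Given $T\in\mathcal V$, I would choose a Toeplitz $A$ and a Hankel $B$ matching $T$ on row $0$ and column $0$; this uses $2n-1$ Toeplitz and $2n-1$ Hankel parameters, with the slack accounting for the 2-dimensional $\cli$. The difference $T-A-B$ then lies in $\mathcal V$ and has zero first row and column. I would then use the remaining freedom in $\cli$ and the last row/column to kill it. Finally I would show that such a matrix in $\mathcal V$ vanishes, by propagating $T_{i+1,j}$ from rows $i,i-1$ as in the rewritten recurrence, after also matching column $n-1$ to handle the $j=n-1$ entries.
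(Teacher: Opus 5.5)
\paragraph{The gap: boundary entries do not determine a solution.} Your identification of $\clt_{z^n}$ and $\clh_{z^n}$ with the Toeplitz and Hankel matrices, the necessity of the recurrence, and the count $\dim\cls_{z^n}=(2n-1)+(2n-1)-2=4n-4$ are all fine. The paper only proves necessity this way and quotes Bevilacqua--Bonanni--Bozzo for the converse, so a self-contained converse would be a real gain.

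The step you flag as the main obstacle, however, is false for every $n\ge 3$. A solution of the recurrence is not determined by its entries in rows $0,n-1$ and columns $0,n-1$. Take
\[
T=\operatorname{diag}(0,1,\ldots,1,0)=I-(E_{00}+E_{n-1,n-1}),
\]
with $E_{kk}$ the matrix units.
\begin{itemize}
\item $I$ is Toeplitz.
\item $E_{00}+E_{n-1,n-1}$ is Hankel, since its entries depend only on $i+j$, which is $0$ or $2n-2$ there.
\item Hence $T\in\cls_{z^n}\subseteq\mathcal V$.
\item Yet $T$ vanishes on all four boundary lines and $T\neq 0$.
\end{itemize}
For $n=3$ this is just the observation that $T_{11}$ appears in no equation. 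This is the discrete version of the non-uniqueness of the Dirichlet problem for the wave equation. So no propagation along characteristics, and no induction on $\min(i,j,n-1-i,n-1-j)$, can prove the claimed injectivity.

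Your fallback has the same defect. Adding $C\in\cli_{z^n}$ to $A$ and subtracting it from $B$ leaves $A+B$ unchanged, so $\cli$ gives no freedom at the level of $T$. A matrix in $\mathcal V$ with vanishing first and last rows and columns need not vanish. Moreover, the row propagation you invoke needs two consecutive zero rows, which you never arrange.

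\paragraph{The repair.} The fix is small, and you nearly found it when you noticed that the row sweep needs row $1$. Take as data the set $\Omega$ consisting of rows $0,1$ and columns $0,n-1$. It also has exactly $2n+2(n-2)=4n-4$ entries.
\begin{itemize}
\item Suppose $T\in\mathcal V$ vanishes on $\Omega$. Write the recurrence at $(i,j)$, $1\le i,j\le n-2$, as $T_{i+1,j}=T_{i,j-1}+T_{i,j+1}-T_{i-1,j}$.
\item By induction on $i$, row $i+1$ vanishes in columns $1,\dots,n-2$. Columns $0$ and $n-1$ vanish by assumption.
\item Hence $T=0$, so restriction to $\Omega$ is injective on $\mathcal V$.
\item Therefore $\dim\mathcal V\le 4n-4=\dim\cls_{z^n}$. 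Together with $\cls_{z^n}\subseteq\mathcal V$, this gives $\cls_{z^n}=\mathcal V$.
\end{itemize}
With this change your dimension-count strategy becomes a complete, elementary proof of the direction the paper only cites.
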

	
	\begin{proof}
		For $\theta(z)=z^n$, we have
		\[
		k_0^\theta=1=e_0,
		\qquad
		\widetilde k_0^\theta=z^{n-1}=e_{n-1}.
		\]
		Moreover,
		\[
		S_\theta e_j=e_{j+1}\quad (0\le j<n-1),
		\qquad
		S_\theta e_{n-1}=0,
		\]
		and
		\[
		S_\theta^*e_j=e_{j-1}\quad (1\le j\le n-1),
		\qquad
		S_\theta^*e_0=0.
		\]
		Thus, for a matrix $X=(X_{ij})$, with the convention that out-of-range indices give zero, we have
		\begin{equation}\label{eq:DTDH-matrix}
			(D_T X)_{ij}=X_{ij}-X_{i-1,j-1},
			\qquad
			(D_H X)_{ij}=X_{ij}-X_{i+1,j-1}.
		\end{equation}
		
		Since $k_0^\theta=e_0$, the defect space
		\[
		\mathcal R_T
		=
		\{u\otimes k_0^\theta+k_0^\theta\otimes v:u,v\in\clk_\theta\}
		\]
		consists of matrices supported only in the first column and the first row. Hence, by Sarason's characterization \eqref{eq:sarason}, and by \eqref{eq:DTDH-matrix}, an operator $A$ is a truncated Toeplitz operator if and only if
		\[
		A_{ij}=A_{i-1,j-1},
		\qquad 1\le i,j\le n-1.
		\]
		Thus $A_{ij}$ depends only on the diagonal index $i-j$, and so $A$ is a Toeplitz matrix.
		
		Similarly, since $k_0^\theta=e_0$ and $\widetilde k_0^\theta=e_{n-1}$, the defect space
		\[
		\mathcal R_H
		=
		\{x\otimes k_0^\theta+\widetilde k_0^\theta\otimes y:x,y\in\clk_\theta\}
		\]
		consists of matrices supported only in the first column and the last row. Hence, by the Gu--Ma characterization \eqref{eq:guma}, and by \eqref{eq:DTDH-matrix}, an operator $B$ is a truncated Hankel operator if and only if
		\[
		B_{ij}=B_{i+1,j-1},
		\qquad 0\le i\le n-2,\ 1\le j\le n-1.
		\]
		Thus $B_{ij}$ depends only on the antidiagonal index $i+j$, and so $B$ is a Hankel matrix.
		
		Consequently, $\mathcal S_\theta=\mathcal T_\theta+\mathcal H_\theta$ is precisely the space of $n\times n$ Toeplitz-plus-Hankel matrices.
		
		It remains to connect this with the four-point recurrence. If $A$ is Toeplitz, then $A_{ij}=\alpha_{i-j}$, and therefore
		\[
		A_{i-1,j}+A_{i+1,j}
		=
		\alpha_{i-1-j}+\alpha_{i+1-j}
		=
		A_{i,j-1}+A_{i,j+1}.
		\]
		Similarly, every Hankel matrix satisfies the same recurrence. By linearity, every Toeplitz-plus-Hankel matrix satisfies
		\[
		T_{i-1,j}+T_{i+1,j}=T_{i,j-1}+T_{i,j+1},
		\qquad 1\le i,j\le n-2.
		\]
		Conversely, the Bevilacqua--Bonanni--Bozzo theorem says that every matrix satisfying this recurrence is Toeplitz-plus-Hankel \cite{BBB}. Hence $T\in\cls_\theta$ if and only if the stated recurrence holds.
	\end{proof}
	
	\begin{remark}\label{rem:T+H-matrix-form}
		The matrix form of an operator $T\in\mathcal S_\theta$ for $\theta(z)=z^n$ is transparent from Corollary~\ref{cor:zn}. Writing $T=A+B$, with $A$ Toeplitz and $B$ Hankel, we have
		\[
		T_{ij}=\alpha_{i-j}+\beta_{i+j},
		\qquad 0\le i,j\le n-1,
		\]
		where $\alpha_{-(n-1)},\dots,\alpha_{n-1}$ are the diagonal parameters of $A$, and $\beta_0,\dots,\beta_{2n-2}$ are the antidiagonal parameters of $B$. Explicitly,
		\[
		T=\begin{pmatrix}
			\alpha_0+\beta_0          & \alpha_{-1}+\beta_1      & \alpha_{-2}+\beta_2      & \cdots & \alpha_{-(n-1)}+\beta_{n-1}   \\
			\alpha_1+\beta_1          & \alpha_0+\beta_2         & \alpha_{-1}+\beta_3      & \cdots & \alpha_{-(n-2)}+\beta_n       \\
			\alpha_2+\beta_2          & \alpha_1+\beta_3         & \alpha_0+\beta_4         & \cdots & \alpha_{-(n-3)}+\beta_{n+1}   \\
			\vdots                    & \vdots                   & \vdots                   & \ddots & \vdots                        \\
			\alpha_{n-1}+\beta_{n-1}  & \alpha_{n-2}+\beta_n     & \alpha_{n-3}+\beta_{n+1} & \cdots & \alpha_0+\beta_{2n-2}
		\end{pmatrix}.
		\]
		This form immediately gives the recurrence
		\[
		T_{i-1,j}+T_{i+1,j}=T_{i,j-1}+T_{i,j+1},
		\qquad 1\le i,j\le n-2.
		\]
	\end{remark}

	\section*{Declarations}
	\noindent\textbf{Funding.} Not applicable. \\
	\noindent\textbf{Conflict of interest.} The authors declare that they have no conflicts of interest. \\
	\noindent\textbf{Data availability.} This work is purely theoretical, and no datasets are used here. 

\end{document}